\newtheorem{theorem}{Theorem}[section]
\newtheorem{corollary}{Corollary}[section]
\newtheorem{lemma}{Lemma}[section]
\theoremstyle{definition}
\theoremstyle{remark}
\numberwithin{equation}{section}
\begin{document}

\title[Arithmetic Properties of Partition Triples  With Odd Parts Distinct]
 {Arithmetic Properties of Partition Triples  \\ With Odd Parts Distinct}

\author{LIUQUAN WANG}
\address{Department of Mathematics, National University of Singapore, Singapore, 119076, SINGAPORE}
\address{Department of Mathematics, Zhejiang University, Hangzhou, 310027, CHINA}

\email{mathlqwang@163.com; wangliuquan@u.nus.edu}

\subjclass[2010]{Primary 05A17; Secondary 11P83}

\keywords{Congruences; partition triples; distinct odd parts; theta functions}

\thanks{This paper appears in International Journal of Number Theory, doi:10.1142/S1793042115500773.}
\dedicatory{}

\maketitle

\begin{abstract}
Let $\mathrm{pod}_{-3}(n)$ denote  the number of partition triples of $n$ where the odd parts in each partition are distinct. We find many  arithmetic properties of $\mathrm{pod}_{-3}(n)$ involving the following infinite family of congruences: for any integers $\alpha \ge 1$ and $n\ge 0$,
\[\mathrm{pod}_{-3}\Big({{3}^{2\alpha +2}}n+\frac{23\times {{3}^{2\alpha +1}}+3}{8}\Big)\equiv 0  \pmod{9}.\]
We also establish some arithmetic relations between $\mathrm{pod}(n)$ and $\mathrm{pod}_{-3}(n)$, as well as  some congruences for $\mathrm{pod}_{-3}(n)$ modulo 7 and 11.
\end{abstract}

\section{Introduction}	

 Let $\mathrm{pod}_{-k}(n)$ be defined by the generating function
\[\sum\limits_{n=0}^{\infty }{\mathrm{pod}_{-k}(n){{q}^{n}}}=\frac{(-q;{{q}^{2}})_{\infty }^{k}}{({{q}^{2}};{{q}^{2}})_{\infty }^{k}}=\frac{1}{\psi {{(-q)}^{k}}},\]
where $\psi(q)$ is  one of Ramanujan's theta functions, namely
\[\psi (q)=\sum\limits_{n=0}^{\infty }{{{q}^{n(n+1)/2}}}=\frac{({{q}^{2}};{{q}^{2}})_{\infty }^{2}}{{{(q;q)}_{\infty }}}.\]

The combinatorial interpretation is that $\mathrm{pod}_{-k}(n)$ denotes the number of partition $k$-tuples of $n$ where in each partition the odd parts are distinct. For $k=1$, $\mathrm{pod}_{-1}(n)$ is often denoted as $\mathrm{pod}(n)$.

In 2010, Hirschhorn and Sellers \cite{hisc} investigated the arithmetic properties of $\mathrm{pod}(n)$. They obtained some interesting congruences. For example, for integers $\alpha \ge 0$ and $n\ge 0$, they showed that
\[\mathrm{pod}\Big({{3}^{2\alpha +3}}n+\frac{23\times {{3}^{2\alpha +2}}+1}{8}\Big)\equiv 0 \pmod{3}.\]
Some internal congruences were also found, such as
\[ \mathrm{pod}(81n+17)\equiv 5 \, \mathrm{pod}(9n+2) \pmod{27}. \]

In 2011, using the tool of modular forms, Radu and Sellers \cite{radu} obtained other deep congruences for $\mathrm{pod}(n)$ modulo $5$ and $7$, such as
\[\mathrm{pod}(135n+8)\equiv \mathrm{pod}(135n+107)\equiv \mathrm{pod}(135n+116)\equiv 0 \pmod{5}.\]
\[\mathrm{pod}(567n+260)\equiv \mathrm{pod}(567n+449)\equiv 0 \pmod{7}.\]
Chen and Xia \cite{wyc} followed by studying the arithmetic properties of  $\mathrm{pod}_{-2}(n)$. They found two infinite families of  congruences modulo 3 and 5, respectively. For more properties of $\mathrm{pod}(n)$, see \cite{lovejoy,Wang3}.

Inspired by their work, we study the  arithmetic properties of $\mathrm{pod}_{-3}(n)$, the number of partition triples of $n$ where the odd parts in each partition are distinct.  In Section 2, we will present an infinite family of Ramanujan-type congruences: for any integers $\alpha \ge 1$ and $n\ge 0$,
\[\mathrm{pod}_{-3}\Big({{3}^{2\alpha +2}}n+\frac{23\times {{3}^{2\alpha +1}}+3}{8}\Big)\equiv 0  \pmod{9}.\]
In Section 3, we establish the following arithmetic relations between $\mathrm{pod}(n)$ and $\mathrm{pod}_{-3}(n)$:
\begin{align} \nonumber
  & \mathrm{pod}_{-3}(3n+2)\equiv 6 \, \mathrm{pod}(9n+5)  \pmod{9}, \\ \nonumber
 & \mathrm{pod}_{-3}(3n+1)\equiv 3 \, \mathrm{pod}(9n+2) \pmod{18}.
\end{align}
In Section 4, we will prove
\[\mathrm{pod}_{-3}(7n+3)\equiv {{(-1)}^{n+1}}{{r}_{3}}(8n+3) \pmod{7},\]
\[\mathrm{pod}_{-3}(11n+10)\equiv {{4(-1)}^{n}}{{r}_{7}}(8n+7)  \pmod{11},\]
where $r_{k}(n)$ denotes the number of representations of $n$ as sum of $k$ squares. Many congruences will also be deduced from these two relations.

\section{An Infinite Family of Congruences Modulo 9}

Before our discussion,  we observe that (see \cite{wyc,hisc})
\[\psi (q)=1+q+{{q}^{3}}+{{q}^{6}}+{{q}^{10}}+{{q}^{15}}+\cdots =A({{q}^{3}})+q\psi ({{q}^{9}}).\]
where
\[A(q) =\frac{{{({{q}^{2}};{{q}^{2}})}_{\infty }}({{q}^{3}};{{q}^{3}})_{\infty }^{2}}{{{(q;q)}_{\infty }}{{({{q}^{6}};{{q}^{6}})}_{\infty }}}.\]

The following lemma plays a  key role  in  both \cite{wyc} and \cite{hisc}, which will also be used frequently in our paper.
\begin{lemma}\label{identity1}(Cf.\ \cite[Lemma 2.1]{hisc}.)
We have
\[A{{({{q}^{3}})}^{3}}+{{q}^{3}}\psi {{({{q}^{9}})}^{3}}=\frac{\psi {{({{q}^{3}})}^{4}}}{\psi ({{q}^{9}})}\]
and
\[
\frac{1}{\psi (q)}=\frac{\psi ({{q}^{9}})}{\psi {{({{q}^{3}})}^{4}}}\left(A{{({{q}^{3}})}^{2}}-qA({{q}^{3}})\psi ({{q}^{9}})+{{q}^{2}}\psi {{({{q}^{9}})}^{2}}\right).
\]
\end{lemma}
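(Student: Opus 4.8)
The plan is to derive the second identity as a purely algebraic consequence of the first, and to reduce the first to a single pure theta-function identity by cubing the stated $3$-dissection.

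For the second identity, set $x=A(q^3)$ and $y=q\psi(q^9)$. The displayed dissection reads $\psi(q)=x+y$, while the quantity in the stated parentheses is exactly $x^2-xy+y^2$. Since $x^3+y^3=(x+y)(x^2-xy+y^2)$, the first identity gives
\[
\frac{\psi(q^3)^4}{\psi(q^9)}=A(q^3)^3+q^3\psi(q^9)^3=\psi(q)\big(A(q^3)^2-qA(q^3)\psi(q^9)+q^2\psi(q^9)^2\big),
\]
and solving for $1/\psi(q)$ yields the second identity at once. Thus everything reduces to the first identity.

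To attack the first identity I would cube the dissection $\psi(q)=A(q^3)+q\psi(q^9)$. Expanding and collecting the two mixed terms, which share the factor $A(q^3)+q\psi(q^9)=\psi(q)$, gives
\[
\psi(q)^3=\big(A(q^3)^3+q^3\psi(q^9)^3\big)+3q\,A(q^3)\psi(q^9)\,\psi(q),
\]
so the first identity is equivalent to the single relation $\psi(q)^3-3q\,A(q^3)\psi(q^9)\,\psi(q)=\psi(q^3)^4/\psi(q^9)$. Here the cross term can be put in closed form: writing $\psi(q)=(q^2;q^2)_\infty^2/(q;q)_\infty$ together with the displayed formula for $A$, one finds
\[
A(q^3)\psi(q^9)=\frac{(q^6;q^6)_\infty(q^9;q^9)_\infty(q^{18};q^{18})_\infty}{(q^3;q^3)_\infty},
\]
so the relation above becomes a pure identity among eta-quotients in $q$.

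I would finish by either of two routes. The elementary route, in the spirit of \cite{hisc,wyc}, is to insert the $3$-dissection of $\psi(q)^3$ and match it term by term against $\psi(q^3)^4/\psi(q^9)$ and the explicit cross term, so that only the powers $q^0,q^3,q^6,\dots$ survive on the left. The safer route is to clear denominators to get the equivalent $\psi(q)^4=A(q)^3\psi(q^3)+q\psi(q^3)^4$ (the first identity after the harmless substitution $q^3\mapsto q$), recognize both sides as holomorphic modular forms of weight $2$ on some $\Gamma_0(N)$ via $\psi(q)=\eta(2\tau)^2/\eta(\tau)$, and invoke the valence formula / Sturm bound to reduce the claim to checking finitely many Fourier coefficients. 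I expect the main obstacle to be precisely this pure theta identity: in the elementary route, organizing the $3$-dissection of $\psi(q)^3$ and the cross term so that the non-cube-power coefficients cancel; in the modular route, fixing the correct level and carrying out the finite but sizeable coefficient verification. The two reductions above are routine, and the theta identity is where the real work sits.
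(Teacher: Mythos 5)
The paper itself offers no proof of this lemma: it is imported verbatim from Hirschhorn and Sellers \cite{hisc}, so there is no internal argument to compare against. Your derivation of the second identity from the first is correct and is the standard one: with $x=A(q^3)$ and $y=q\psi(q^9)$ the dissection $\psi(q)=x+y$ and the factorization $x^3+y^3=(x+y)(x^2-xy+y^2)$ do all the work. The genuine gap is that the first identity, which you rightly identify as carrying all the content, is never actually established. Of your two proposed routes, the ``elementary'' one is circular as stated: cubing $\psi(q)=x+qt$ shows that the $q^{3k}$-part of $\psi(q)^3$ is exactly $x^3+q^3t^3$ while the cross term $3qxt\,\psi(q)$ accounts for the $q^{3k+1}$ and $q^{3k+2}$ parts, so ``matching the $3$-dissection of $\psi(q)^3$ against $\psi(q^3)^4/\psi(q^9)$'' is just a restatement of the identity to be proved unless you bring in an independent evaluation of that dissection. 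The modular-forms route would work in principle but is left entirely unexecuted (level, holomorphy at cusps, and the Sturm-bound check are all deferred).

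The missing idea that closes the gap cheaply is to factor $x^3+q^3t^3$ multiplicatively over the cube roots of unity rather than expand $(x+qt)^3$ additively. Let $\omega=e^{2\pi i/3}$. Since $A((\omega^jq)^3)=A(q^3)$ and $\psi((\omega^jq)^9)=\psi(q^9)$, the dissection gives $\psi(\omega^jq)=A(q^3)+\omega^jq\,\psi(q^9)$, and hence
\[
\psi(q)\,\psi(\omega q)\,\psi(\omega^2q)=\prod_{j=0}^{2}\bigl(A(q^3)+\omega^jq\,\psi(q^9)\bigr)=A(q^3)^3+q^3\psi(q^9)^3 .
\]
On the other hand, the elementary evaluation $\prod_{j=0}^{2}(\omega^jq;\omega^jq)_\infty=(q^3;q^3)_\infty^4/(q^9;q^9)_\infty$ together with $\psi(q)=(q^2;q^2)_\infty^2/(q;q)_\infty$ yields directly
\[
\psi(q)\,\psi(\omega q)\,\psi(\omega^2q)=\frac{(q^6;q^6)_\infty^{8}\,(q^9;q^9)_\infty}{(q^3;q^3)_\infty^{4}\,(q^{18};q^{18})_\infty^{2}}=\frac{\psi(q^3)^4}{\psi(q^9)},
\]
which is the first identity. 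This eliminates both the cross term $3qA(q^3)\psi(q^9)\psi(q)$ and any need for a Sturm-bound computation; combined with your (correct) reduction of the second identity to the first, it completes the proof.
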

For convenience, throughout the paper, we denote $s=A({{q}^{3}})$ and $t=\psi ({{q}^{9}})$. Then Lemma \ref{identity1}  can be rewritten as
\begin{equation}\label{stone}
{{s}^{3}}+{{q}^{3}}{{t}^{3}}=\frac{\psi {{({{q}^{3}})}^{4}}}{\psi ({{q}^{9}})},
\end{equation}
\begin{equation}\label{sttwo}
\frac{1}{\psi (q)}=\frac{\psi ({{q}^{9}})}{\psi {{({{q}^{3}})}^{4}}}({{s}^{2}}-qst+{{q}^{2}}{{t}^{2}}).
\end{equation}

To establish some congruences using $q$ series, we also need the following fact.
\begin{lemma}\label{basic}(Cf.\ \cite[Lemma 1.2]{radu}.)
Let $p$ be a prime and $\alpha $ be a positive integer. Then
\[(q;q)_{\infty }^{{{p}^{\alpha }}}\equiv ({{q}^{p}};{{q}^{p}})_{\infty }^{{{p}^{\alpha -1}}}  \pmod  {{{p}^{\alpha }}}.\]
\end{lemma}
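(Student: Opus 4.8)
The plan is to prove the congruence by induction on $\alpha$, using the ``freshman's dream'' identity for the base case and a careful tracking of $p$-adic valuations in a binomial expansion for the inductive step.

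For the base case $\alpha=1$, I would invoke the elementary fact that $\binom{p}{k}\equiv 0 \pmod{p}$ for $0<k<p$, which gives $(1-q^{n})^{p}\equiv 1-q^{np} \pmod{p}$ for every $n\ge 1$. Taking the product over $n\ge 1$ then yields
\[(q;q)_{\infty}^{p}=\prod_{n=1}^{\infty}(1-q^{n})^{p}\equiv \prod_{n=1}^{\infty}(1-q^{np})=(q^{p};q^{p})_{\infty} \pmod{p},\]
which is exactly the assertion at level $\alpha=1$.

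For the inductive step, suppose the statement holds at level $\alpha-1$ for some $\alpha\ge 2$. Then I may write
\[(q;q)_{\infty}^{p^{\alpha-1}}=(q^{p};q^{p})_{\infty}^{p^{\alpha-2}}+p^{\alpha-1}f(q)\]
for some power series $f(q)$ with integer coefficients. Raising both sides to the $p$-th power and expanding by the binomial theorem, the $k=0$ term reproduces $(q^{p};q^{p})_{\infty}^{p^{\alpha-1}}$, the $k=1$ term carries a factor $\binom{p}{1}p^{\alpha-1}=p^{\alpha}$, and each term with $k\ge 2$ carries a factor $p^{(\alpha-1)k}$.

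The only mildly delicate point, and the crux of the argument, is the valuation estimate $(\alpha-1)k\ge \alpha$ whenever $k\ge 2$ and $\alpha\ge 2$, which holds since $(\alpha-1)k\ge 2(\alpha-1)=2\alpha-2\ge \alpha$. Consequently every term past the leading one is divisible by $p^{\alpha}$, and we obtain
\[(q;q)_{\infty}^{p^{\alpha}}\equiv (q^{p};q^{p})_{\infty}^{p^{\alpha-1}} \pmod{p^{\alpha}},\]
completing the induction. I do not anticipate any real obstacle: the result is classical, and the entire difficulty reduces to the bookkeeping of powers of $p$ in the binomial expansion, which the displayed inequality settles cleanly.
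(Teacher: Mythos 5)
Your proof is correct. The paper does not prove this lemma at all --- it simply cites it as Lemma 1.2 of Radu and Sellers --- and your induction (freshman's dream for the base case, then the binomial expansion of $\bigl((q^{p};q^{p})_{\infty}^{p^{\alpha-2}}+p^{\alpha-1}f(q)\bigr)^{p}$ with the valuation bound $(\alpha-1)k\ge\alpha$ for $k\ge 2$) is exactly the standard argument one would supply, with no gaps.
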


Since $\psi (q)=\frac{({{q}^{2}};{{q}^{2}})_{\infty }^{2}}{{{(q;q)}_{\infty }}}$, we deduce from Lemma \ref{basic} that $\psi (q)^{p} \equiv \psi (q^{p})$ (mod $p$) for any prime $p$. We will use this fact many times.

We are going to establish an infinite family of congruences modulo 9.
\begin{theorem}\label{pod3mod9id}
We have
\begin{align} \nonumber
\begin{split}
\sum\limits_{n=0}^{\infty }{{{(-1)}^{n}}\mathrm{pod}_{-3}(3n){{q}^{n}}}&\equiv \frac{\psi ({{q}^{3}})}{\psi {{(q)}^{4}}} \pmod{9}.\\ \nonumber
\sum\limits_{n=0}^{\infty }{{{(-1)}^{n}}\mathrm{pod}_{-3}(3n+1){{q}^{n}}}&\equiv 3\psi (q)A{{(q)}^{2}} \pmod{9}. \\ \nonumber
\sum\limits_{n=0}^{\infty }{{{(-1)}^{n}}\mathrm{pod}_{-3}(3n+2){{q}^{n}}}&\equiv -3\psi ({{q}^{3}})\psi (q)A(q)  \pmod{9}.
\end{split}
\end{align}
\end{theorem}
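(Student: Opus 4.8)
The plan is to reduce everything to the $3$-dissection of $1/\psi(q)^3$. Replacing $q$ by $-q$ in the defining generating function gives $\sum_{n\ge 0}\mathrm{pod}_{-3}(n)(-q)^n=1/\psi(q)^3$, that is, $1/\psi(q)^3=\sum_{n\ge 0}(-1)^n\mathrm{pod}_{-3}(n)q^n$. Writing the $3$-dissection as $1/\psi(q)^3=\sum_{r=0}^{2}q^r f_r(q^3)$ and comparing coefficients of $q^{3n+r}$ (using $(-1)^{3n}=(-1)^n$) shows $\sum_{n\ge 0}(-1)^n\mathrm{pod}_{-3}(3n+r)q^n=(-1)^r f_r(q)$. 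Hence the three asserted congruences are equivalent to $f_0\equiv \psi(q^3)/\psi(q)^4$, $-f_1\equiv 3\psi(q)A(q)^2$ and $f_2\equiv -3\psi(q^3)\psi(q)A(q)$, all modulo $9$, and it suffices to compute each $f_r$ explicitly.

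To obtain the $f_r$ I would cube \eqref{sttwo}. With $s=A(q^3)$ and $t=\psi(q^9)$ this gives $1/\psi(q)^3=\dfrac{t^3}{\psi(q^3)^{12}}(s^2-qst+q^2t^2)^3$. Since $s$, $t$ and $\psi(q^3)$ are power series in $q^3$, the dissection is controlled entirely by the explicit powers of $q$ in the cube. Because the trinomial is weighted-homogeneous (each monomial has $t$-degree equal to its $q$-degree and $s$-degree $6$ minus that), one finds $(s^2-qst+q^2t^2)^3=\sum_{j=0}^{6}c_j\,s^{6-j}t^j q^{j}$ with $(c_0,\dots,c_6)=(1,-3,6,-7,6,-3,1)$, the coefficients of $(1-q+q^2)^3$. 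Collecting the terms with $j\equiv r\pmod 3$ yields $f_r(q^3)$. Using \eqref{stone} in the form $\psi(q^3)^{12}=t^3(s^3+q^3t^3)^3$ to clear the denominator, then replacing $q^3$ by $q$ and abbreviating $S=A(q)$, $T=\psi(q^3)$, each $f_r$ becomes a rational function with denominator $(S^3+qT^3)^3$. Here \eqref{stone}, rewritten after the same substitution as $S^3+qT^3=\psi(q)^4/T$, identifies $S^3+qT^3$ with $\psi(q)^4/\psi(q^3)$.

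The final step is the reduction modulo $9$, where two different mechanisms appear. For $r=0$ the numerator is $S^6-7S^3T^3q+T^6q^2=(S^3+qT^3)^2-9S^3T^3q$, so it is congruent to $(S^3+qT^3)^2$ modulo $9$ and $f_0\equiv (S^3+qT^3)^{-1}=\psi(q^3)/\psi(q)^4\pmod 9$. For $r=1,2$ the numerators already carry an explicit factor $3$, namely $3S^2T(-S^3+2qT^3)$ and $3ST^2(2S^3-qT^3)$, so I only need the remaining factor modulo $3$. There $-S^3+2qT^3\equiv -(S^3+qT^3)$ and $2S^3-qT^3\equiv -(S^3+qT^3)\pmod 3$ cancel one power of the denominator, leaving $-S^2T/(S^3+qT^3)^2$ and $-ST^2/(S^3+qT^3)^2$. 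Since $(S^3+qT^3)^2=\psi(q)^8/T^2$ and the remark after Lemma~\ref{basic} gives $\psi(q)^3\equiv T\pmod 3$, hence $\psi(q)^9\equiv T^3$, these collapse to $-S^2\psi(q)$ and $-S\psi(q)\psi(q^3)$ modulo $3$. Multiplying back by $3$ yields $f_1\equiv -3\psi(q)A(q)^2$ and $f_2\equiv-3\psi(q^3)\psi(q)A(q)\pmod 9$, and substituting into $(-1)^r f_r$ produces exactly the three stated congruences.

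I expect the main obstacle to be this last simplification rather than the dissection itself: one must eliminate $\psi(q^3)^{12}$ through \eqref{stone} and then notice that the factor $9$ arises in two genuinely different ways—an exact difference of $9S^3T^3q$ when $r=0$, versus an explicit factor $3$ together with a modulo-$3$ cancellation when $r=1,2$. Keeping track of the signs through the substitutions $q\mapsto -q$ and $q^3\mapsto q$ is the principal bookkeeping hazard, but it is otherwise routine.
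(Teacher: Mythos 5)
Your proposal is correct and follows essentially the same route as the paper: cube \eqref{sttwo}, split $(s^2-qst+q^2t^2)^3$ by residue of the $q$-exponent modulo $3$, reduce the three pieces via \eqref{stone} (using $-7\equiv 2\pmod 9$ for the $r=0$ piece and the explicit factor $3$ together with a mod-$3$ congruence for $r=1,2$), and finish with $\psi(q)^9\equiv\psi(q^3)^3\pmod 3$ from Lemma~\ref{basic}. The only difference is cosmetic bookkeeping—you clear $\psi(q^3)^{12}$ through $(s^3+q^3t^3)^3$ up front, while the paper keeps that denominator and applies \eqref{stone} at the end—so the two arguments are the same in substance.
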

\begin{proof}
By (\ref{sttwo}) we have
\begin{equation}\label{pod03exansion}
\sum\limits_{n=0}^{\infty }{{{(-1)}^{n}}\mathrm{pod}_{-3}(n){{q}^{n}}}=\frac{1}{\psi {{(q)}^{3}}}=\frac{\psi {{({{q}^{9}})}^{3}}}{\psi {{({{q}^{3}})}^{12}}}
{({{s}^{2}}-qst+{{q}^{2}}{{t}^{2}})}^{3}.
\end{equation}
Note that
\begin{equation}\label{sqt3}
{{({{s}^{2}}-qst+{{q}^{2}}{{t}^{2}})}^{3}}=({{s}^{6}}-7{{q}^{3}}{{s}^{3}}{{t}^{3}}+{{q}^{6}}{{t}^{6}})+q(-3{{s}^{5}}t+6{{q}^{3}}{{s}^{2}}{{t}^{4}})+{{q}^{2}}(6{{s}^{4}}{{t}^{2}}-3{{q}^{3}}s{{t}^{5}}).
\end{equation}
Applying (\ref{stone}) we obtain
\begin{displaymath}
\begin{split}
   \sum\limits_{n=0}^{\infty }{{{(-1)}^{3n}}\mathrm{pod}_{-3}(3n){{q}^{3n}}}
 & =\frac{\psi {{({{q}^{9}})}^{3}}}{\psi {{({{q}^{3}})}^{12}}}({{s}^{6}}-7{{q}^{3}}{{s}^{3}}{{t}^{3}}+{{q}^{6}}{{t}^{6}}) \\
 & \equiv \frac{\psi {{({{q}^{9}})}^{3}}}{\psi {{({{q}^{3}})}^{12}}}{{({{s}^{3}}+{{q}^{3}}{{t}^{3}})}^{2}} \\
 & \equiv \frac{\psi ({{q}^{9}})}{\psi {{({{q}^{3}})}^{4}}} \pmod{9}.
\end{split}
\end{displaymath}
Replacing ${{q}^{3}}$ by $q$,  we get the first congruence identity.

Similarly,
\begin{displaymath}
\begin{split}
   \sum\limits_{n=0}^{\infty }{{{(-1)}^{3n+1}}\mathrm{pod}_{-3}(3n+1){{q}^{3n+1}}}
 & =q\frac{\psi {{({{q}^{9}})}^{3}}}{\psi {{({{q}^{3}})}^{12}}}\big(-3{{s}^{2}}t({{s}^{3}}+{{q}^{3}}{{t}^{3}})+9{{q}^{3}}{{s}^{2}}{{t}^{4}}\big) \\
 & \equiv -3q\frac{\psi {{({{q}^{9}})}^{3}}}{\psi {{({{q}^{3}})}^{12}}}{{s}^{2}}t({{s}^{3}}+{{q}^{3}}{{t}^{3}}) \\
 & \equiv -3q\frac{\psi {{({{q}^{9}})}^{3}}}{\psi {{({{q}^{3}})}^{8}}} A{{({{q}^{3}})}^{2}} \pmod{9}.
\end{split}
\end{displaymath}
Dividing both sides by $-q$ , then replacing ${{q}^{3}}$ by $q$ and applying Lemma \ref{basic}, we obtain
\[\sum\limits_{n=0}^{\infty }{{{(-1)}^{n}}\mathrm{pod}_{-3}(3n+1){{q}^{n}}}\equiv 3\frac{\psi {{({{q}^{3}})}^{3}}}{\psi {{(q)}^{8}}} A{{(q)}^{2}}\equiv 3\psi (q)A{{(q)}^{2}} \pmod{9}. \]
This completes the proof of the second congruence identity.

In the same way,  we have
\begin{displaymath}
\begin{split}
  \sum\limits_{n=0}^{\infty }{{{(-1)}^{3n+2}}\mathrm{pod}_{-3}(3n+2){{q}^{3n+2}}}& ={{q}^{2}}\frac{\psi {{({{q}^{9}})}^{3}}}{\psi {{({{q}^{3}})}^{12}}}\big(9{{s}^{4}}{{t}^{2}}-3s{{t}^{2}}({{s}^{3}}+{{q}^{3}}{{t}^{3}})\big) \\
 & \equiv -3{{q}^{2}}\frac{\psi {{({{q}^{9}})}^{3}}}{\psi {{({{q}^{3}})}^{12}}}s{{t}^{2}}({{s}^{3}}+{{q}^{3}}{{t}^{3}}) \\
 & \equiv -3{{q}^{2}}\frac{\psi {{({{q}^{9}})}^{4}}}{\psi {{({{q}^{3}})}^{8}}}A({{q}^{3}}) \pmod{9}.
\end{split}
\end{displaymath}
Dividing both sides by ~${{q}^{2}}$,   then replacing  ${{q}^{3}}$ by $q$ and applying Lemma \ref{basic}, we obtain
\[\sum\limits_{n=0}^{\infty }{{{(-1)}^{n}}\mathrm{pod}_{-3}(3n+2){{q}^{n}}}\equiv -3\frac{\psi {{({{q}^{3}})}^{4}}}{\psi {{(q)}^{8}}}A(q)\equiv -3\psi ({{q}^{3}})\psi (q)A(q) \pmod{9}.\]
\quad
\end{proof}

\begin{theorem}\label{pod3mod9alpha}
For any integers $\alpha \ge 1$ and $n\ge 0$, we have
\begin{equation}\label{mod1}
\sum\limits_{n=0}^{\infty }{{{(-1)}^{n}}\mathrm{pod}_{-3}\Big({{3}^{2\alpha}}n+\frac{5\times {{3}^{2\alpha }}+3}{8}\Big){{q}^{n}}\equiv {{(-1)}^{\alpha -1}}\frac{\psi {{({{q}^{3}})}^{4}}}{\psi {{(q)}^{7}}}} \pmod{9}
\end{equation}
and
\begin{equation}\label{mod2}
\sum\limits_{n=0}^{\infty }{{{(-1)}^{n}}\mathrm{pod}_{-3}\Big({{3}^{2\alpha +1}}n+\frac{7\times {{3}^{2\alpha +1}}+3}{8}\Big){{q}^{n}}}\equiv {{(-1)}^{\alpha -1}}\frac{\psi {{({{q}^{3}})}^{5}}}{\psi {{(q)}^{8}}}  \pmod{9}.
\end{equation}
\end{theorem}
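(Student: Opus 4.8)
The plan is to prove (\ref{mod1}) and (\ref{mod2}) simultaneously by induction on $\alpha$, treating them as two interlocking statements that feed into each other through a $3$-dissection. Writing $c_\alpha=\frac{5\cdot 3^{2\alpha}+3}{8}$ and $d_\alpha=\frac{7\cdot 3^{2\alpha+1}+3}{8}$ for the two progressions, one verifies the index identities $3^{2\alpha}\cdot 2+c_\alpha=d_\alpha$ and $3^{2\alpha+1}\cdot 1+d_\alpha=c_{\alpha+1}$. These dictate exactly which subprogressions to extract: restricting the series in (\ref{mod1}) at level $\alpha$ to its exponents $\equiv 2\pmod 3$ should produce (\ref{mod2}) at level $\alpha$, while restricting the series in (\ref{mod2}) at level $\alpha$ to exponents $\equiv 1\pmod 3$ should produce (\ref{mod1}) at level $\alpha+1$. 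The sign $(-1)^m$ becomes $(-1)^n$ on the class $m\equiv 2$ (no change) and $-(-1)^n$ on the class $m\equiv 1$ (a change of sign); the latter flip is precisely what turns $(-1)^{\alpha-1}$ into $(-1)^{\alpha}$ and advances the induction, while the former step preserves the sign. For the base case $\alpha=1$ I would extract the exponent $\equiv 2\pmod 3$ part of the first congruence of Theorem \ref{pod3mod9id}, since $9n+6=3(3n+2)$ and $c_1=6$.

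The computational engine in every step is the same dissection modulo $9$, performed through (\ref{sttwo}). Because $s=A(q^3)$, $t=\psi(q^9)$ and $\psi(q^3)$ are all power series in $q^3$, the only source of nontrivial residues mod $3$ is the explicit factor $(s^2-qst+q^2t^2)$, so dissecting $\psi(q^3)^a/\psi(q)^b$ reduces to dissecting $(s^2-qst+q^2t^2)^b$ by the explicit power of $q$. The crucial simplification is that, modulo $9$, (\ref{sqt3}) collapses to
\[
(s^2-qst+q^2t^2)^3\equiv P^2+3R\pmod 9,
\]
where $P=s^3+q^3t^3$ and $R=-qs^2t(s^3-2q^3t^3)+q^2st^2(2s^3-q^3t^3)$ collects the off-diagonal terms; squaring gives $(s^2-qst+q^2t^2)^6\equiv P^4+6P^2R\pmod 9$. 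I would then multiply by the remaining one or two factors of $(s^2-qst+q^2t^2)$ and isolate the target residue class.

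The feature I expect to exploit is that every correction term carrying a factor of $3$ or $6$ recombines and collapses modulo $9$. For example, in passing from (\ref{mod2}) to (\ref{mod1}) one needs the exponent $\equiv 1$ part of $(s^2-qst+q^2t^2)^8$; after rewriting $s^3-2q^3t^3=P-3q^3t^3$ and $q^3t^3-2s^3=P-3s^3$, the surviving contributions telescope to $qtP^5\pmod 9$, every genuinely off-diagonal term having acquired a coefficient divisible by $9$. Invoking (\ref{stone}) in the form $P=\psi(q^3)^4/\psi(q^9)$ to convert powers of $P$ back into theta quotients then yields the three clean dissection identities
\[
\Big[\tfrac{\psi(q^3)}{\psi(q)^4}\Big]_{2}\equiv q^2\tfrac{\psi(q^9)^4}{\psi(q^3)^7},\quad
\Big[\tfrac{\psi(q^3)^4}{\psi(q)^7}\Big]_{2}\equiv q^2\tfrac{\psi(q^9)^5}{\psi(q^3)^8},\quad
\Big[\tfrac{\psi(q^3)^5}{\psi(q)^8}\Big]_{1}\equiv q\tfrac{\psi(q^9)^4}{\psi(q^3)^7}\pmod 9,
\]
where $[\,\cdot\,]_r$ denotes the part with exponents $\equiv r\pmod 3$. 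Dividing by the appropriate power of $q$ and replacing $q^3$ by $q$ matches each target up to the sign already accounted for, closing the induction.

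The main obstacle is bookkeeping: keeping the expansions of $(s^2-qst+q^2t^2)^7$ and $(s^2-qst+q^2t^2)^8$ under control modulo $9$ and checking that the degree-one and degree-two (in the explicit $q$) contributions genuinely vanish. The $P,R$ reduction is what makes this finite and, more importantly, forces all surviving terms into pure powers of $P$; once that is secured, (\ref{stone}) finishes the routine simplification, and the index and sign arithmetic above guarantees that the extracted progressions and signs are exactly those appearing in (\ref{mod1}) and (\ref{mod2}).
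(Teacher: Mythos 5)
Your proposal is correct and follows essentially the same route as the paper: the same interleaved induction (extracting the $q^{3k+2}$ part of (\ref{mod1}) to get (\ref{mod2}) at the same level, then the $q^{3k+1}$ part of (\ref{mod2}) to advance to $\alpha+1$, with the sign flip coming from $(-1)^{3n+1}=-(-1)^n$), the same base case from Theorem \ref{pod3mod9id}, and the same three dissection identities obtained by reducing powers of $s^2-qst+q^2t^2$ modulo $9$ and invoking (\ref{stone}). Your $P$, $R$ bookkeeping is just a tidier repackaging of the explicit multinomial expansions the paper writes out, and all the identities you state check out.
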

\begin{proof}
By (\ref{sttwo}) and  the first congruence in Theorem \ref{pod3mod9id},  we have
\begin{equation}\label{oneterm1}
\sum\limits_{n=0}^{\infty }{{{(-1)}^{n}}\mathrm{pod}_{-3}(3n){{q}^{n}}}\equiv \frac{\psi ({{q}^{3}})}{\psi {{(q)}^{4}}}=\frac{\psi {{({{q}^{9}})}^{4}}}{\psi {{({{q}^{3}})}^{15}}}{{({{s}^{2}}-qst+{{q}^{2}}{{t}^{2}})}^{4}} \pmod{9}.
\end{equation}
Note that
\begin{equation}\label{sqt4}
\begin{split}
{{({{s}^{2}}-qst+{{q}^{2}}{{t}^{2}})}^{4}}&=({{s}^{8}}-16{{q}^{3}}{{s}^{5}}{{t}^{3}}+10{{q}^{6}}{{s}^{2}}{{t}^{6}})+q(-4{{s}^{7}}t+19{{q}^{3}}{{s}^{4}}{{t}^{4}} -4{{q}^{6}}s{{t}^{7}})\\
&\quad +{{q}^{2}}(10{{s}^{6}}{{t}^{2}}-16{{q}^{3}}{{s}^{3}}{{t}^{5}}+{{q}^{6}}{{t}^{8}}).
\end{split}
\end{equation}
 Collecting all the terms of the form ${{q}^{3k+2}}$ in (\ref{oneterm1}),  we obtain
\begin{displaymath}
\begin{split}
 \sum\limits_{n=0}^{\infty }{{{(-1)}^{3n+2}}\mathrm{pod}_{-3}(9n+6){{q}^{3n+2}}} & =  {{q}^{2}}\frac{\psi {{({{q}^{9}})}^{4}}}{\psi {{({{q}^{3}})}^{15}}} (10{{s}^{6}}{{t}^{2}}-16{{q}^{3}}{{s}^{3}}{{t}^{5}}+{{q}^{6}}{{t}^{8}}) \\
 & \equiv {{q}^{2}}\frac{\psi {{({{q}^{9}})}^{4}}}{\psi {{({{q}^{3}})}^{15}}}{{t}^{2}}{{({{s}^{3}}+{{q}^{3}}{{t}^{3}})}^{2}} \\
  & \equiv {{q}^{2}}\frac{\psi {{({{q}^{9}})}^{4}}}{\psi {{({{q}^{3}})}^{7}}} \pmod{9} .
\end{split}
\end{displaymath}
Dividing both sides by ${{q}^{2}}$ and then replacing ${{q}^{3}}$ by $q$,  we get
exactly (\ref{mod1}) with $\alpha =1.$

Now suppose we already have (\ref{mod1}) for a given $\alpha$.
By (\ref{sttwo}) it follows that
\[\frac{\psi {{({{q}^{3}})}^{4}}}{\psi {{(q)}^{7}}}=\frac{\psi {{({{q}^{9}})}^{7}}}{\psi {{({{q}^{3}})}^{24}}}{{ ({{s}^{2}}-qst+{{q}^{2}}{{t}^{2}})}^{7}}.\]
Collecting all the terms of the form ${{q}^{3k+2}}$ in ${{({{s}^{2}}-qst+{{q}^{2}}{{t}^{2}})}^{7}}$,  we obtain
\begin{displaymath}
\begin{split}
  & {{q}^{2}}(28{{s}^{12}}t^{2}-266{{q}^{3}}{{s}^{9}}{{t}^{5}}+357{{q}^{6}}{{s}^{6}}{{t}^{8}}-77{{q}^{9}}{{s}^{3}}{{t}^{11}}+{{q}^{12}}{{t}^{14}}) \\
 & \equiv {{q}^{2}}{{t}^{2}}({{s}^{12}}+4{{q}^{3}}{{s}^{9}}{{t}^{3}}+6{{s}^{6}}{{q}^{6}}{{t}^{6}}+4{{q}^{9}}{{s}^{3}}{{t}^{9}}+{{q}^{12}}{{t}^{12}}) \\
 & \equiv {{q}^{2}}{{t}^{2}}{{({{s}^{3}}+{{q}^{3}}{{t}^{3}})}^{4}} \pmod{9}.
\end{split}
\end{displaymath}
Hence when collecting all the  terms of the form ${{q}^{3k+2}}$ in  (\ref{mod1}) and applying (\ref{stone}), we get
\[ \sum\limits_{n=0}^{\infty }(-1)^{3n+2}{\mathrm{pod}_{-3}\Big({{3}^{2\alpha }}(3n+2)+\frac{5\times {{3}^{2\alpha }}+3}{8}\Big){{q}^{3n+2}}} \equiv {{(-1)}^{\alpha -1}}{{q}^{2}}\frac{\psi {{({{q}^{9}})}^{5}}}{\psi {{({{q}^{3}})}^{8}}} \pmod{9}.\]

Dividing both sides by ${{q}^{2}}$ and then replacing ${{q}^{3}}$ by $q$, we deduce that
\begin{equation}\label{alpha2}
\sum\limits_{n=0}^{\infty }{{{(-1)}^{n}}\mathrm{pod}_{-3}\Big({{3}^{2\alpha +1}}n+\frac{7\times {{3}^{2\alpha +1}}+3}{8}\Big){{q}^{n}}}\equiv {{(-1)}^{\alpha -1}}\frac{\psi {{({{q}^{3}})}^{5}}}{\psi {{(q)}^{8}}} \pmod{9}.
\end{equation}

Again by (\ref{sttwo}) we have
\[\frac{\psi {{({{q}^{3}})}^{5}}}{\psi {{(q)}^{8}}}=\frac{\psi {{({{q}^{9}})}^{8}}}{\psi {{({{q}^{3}})}^{27}}}{{({{s}^{2}}-qst+{{q}^{2}}{{t}^{2}})}^{8}}.\]
Collecting all the terms  of the form ${{q}^{3k+1}}$ in ${{({{s}^{2}}-qst+{{q}^{2}}{{t}^{2}})}^{8}}$, we get
\begin{displaymath}
\begin{split}
  & \quad q(-8{{s}^{15}}t+266{{q}^{3}}{{s}^{12}}{{t}^{4}}-1016{{q}^{6}}{{s}^{9}}{{t}^{7}}+784{{q}^{9}}{{s}^{6}}{{t}^{10}}-112{{q}^{12}}{{s}^{3}}{{t}^{13}}+{{q}^{15}}{{t}^{16}}) \\
 & \equiv qt({{s}^{15}}+5{{q}^{3}}{{s}^{12}}{{t}^{3}}+10{{q}^{6}}{{s}^{9}}{{t}^{6}}+10{{q}^{9}}{{s}^{6}}{{t}^{9}}+5{{q}^{12}}{{s}^{3}}{{t}^{12}}+{{q}^{15}}{{t}^{15}}) \\
 & \equiv qt{{({{s}^{3}}+{{q}^{3}}{{t}^{3}})}^{5}}  \pmod{9}.
\end{split}
\end{displaymath}
If we collect all the terms of the form ${{q}^{3k+1}}$ in (\ref{alpha2}), we obtain
\[ \sum\limits_{n=0}^{\infty }{{{(-1)}^{3n+1}}\mathrm{pod}_{-3}\Big({{3}^{2\alpha +1}}(3n+1)+\frac{7\times {{3}^{2\alpha +1}}+3}{8}\Big){q}^{3n+1}} \equiv {{(-1)}^{\alpha -1}}q\frac{\psi {{({{q}^{9}})}^{4}}}{\psi {{({{q}^{3}})}^{7}}} \pmod{9}. \]
Now dividing both sides by $-q$ and then replacing ${{q}^{3}}$ by $q$,  we get
\[\sum\limits_{n=0}^{\infty }{{{(-1)}^{n}}\mathrm{pod}_{-3}\Big({{3}^{2\alpha +2}}n+\frac{5\times {{3}^{2\alpha +2}}+3}{8}\Big){{q}^{n}}}
 ={{(-1)}^{(\alpha +1)-1}}\frac{\psi {{({{q}^{3}})}^{4}}}{\psi {{(q)}^{7}}} \pmod{9}.\]
By doing  induction on $\alpha $, we complete our proof.
\end{proof}
\begin{corollary}
For any integer $n\ge 0$, we have
\begin{displaymath}
\begin{split}
  & \mathrm{pod}_{-3}(81n+51)\equiv -\mathrm{pod}_{-3}(9n+6) \pmod{9}, \\
 & \mathrm{pod}_{-3}(27n+24)\equiv -\mathrm{pod}_{-3}(243n+213) \pmod{9}.
\end{split}
\end{displaymath}
\end{corollary}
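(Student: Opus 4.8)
The plan is to read off all four arithmetic progressions appearing in the corollary as the particular cases $\alpha=1$ and $\alpha=2$ of the two families established in Theorem \ref{pod3mod9alpha}, and then to exploit the fact that the right-hand sides there depend on $\alpha$ only through the sign $(-1)^{\alpha-1}$. Since that sign flips exactly when $\alpha$ increases by one, the generating functions for consecutive values of $\alpha$ are negatives of one another modulo $9$, and the two relations drop out by comparing coefficients.

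First I would match the progressions to (\ref{mod1}). Evaluating the offset $\tfrac{5\times 3^{2\alpha}+3}{8}$ at $\alpha=1$ gives $\tfrac{45+3}{8}=6$, so $9n+6$ is the $\alpha=1$ instance, while $\alpha=2$ gives $\tfrac{405+3}{8}=51$, so $81n+51$ is the $\alpha=2$ instance. Hence (\ref{mod1}) yields
\[
\sum_{n\ge 0}(-1)^n\mathrm{pod}_{-3}(9n+6)q^n\equiv \frac{\psi(q^3)^4}{\psi(q)^7},\qquad \sum_{n\ge 0}(-1)^n\mathrm{pod}_{-3}(81n+51)q^n\equiv -\frac{\psi(q^3)^4}{\psi(q)^7}\pmod 9.
\]
Adding these two congruences, the common right-hand side cancels, and comparing the coefficient of $q^n$ (the factor $(-1)^n$ is identical on both sides) gives $\mathrm{pod}_{-3}(81n+51)\equiv -\mathrm{pod}_{-3}(9n+6)\pmod 9$.

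The second congruence is entirely analogous, using (\ref{mod2}) in place of (\ref{mod1}). Evaluating $\tfrac{7\times 3^{2\alpha+1}+3}{8}$ at $\alpha=1$ gives $\tfrac{189+3}{8}=24$ and at $\alpha=2$ gives $\tfrac{1701+3}{8}=213$, so $27n+24$ and $243n+213$ are the $\alpha=1$ and $\alpha=2$ instances of (\ref{mod2}), carrying the signs $(-1)^0=1$ and $(-1)^1=-1$ respectively. Comparing coefficients exactly as above yields $\mathrm{pod}_{-3}(243n+213)\equiv -\mathrm{pod}_{-3}(27n+24)\pmod 9$, which is the asserted relation.

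There is no genuine obstacle here: the entire argument is bookkeeping built on Theorem \ref{pod3mod9alpha}. The only point demanding any care is the verification that the closed-form offsets $\tfrac{5\times 3^{2\alpha}+3}{8}$ and $\tfrac{7\times 3^{2\alpha+1}+3}{8}$ really evaluate to the integers $6,51$ and $24,213$ at $\alpha=1,2$, together with the observation that the sign $(-1)^{\alpha-1}$ changes precisely when $\alpha$ is incremented, which is exactly what manufactures the minus signs in both relations.
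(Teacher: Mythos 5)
Your proposal is correct and is exactly the paper's own argument: the paper also specializes Theorem \ref{pod3mod9alpha} to $\alpha=1,2$ in (\ref{mod1}) and (\ref{mod2}), observes that the right-hand sides differ only by the sign $(-1)^{\alpha-1}$, and compares coefficients. Your offset computations ($6,51$ for (\ref{mod1}) and $24,213$ for (\ref{mod2})) all check out.
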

\begin{proof}
Let $\alpha =1,2$ in (\ref{mod1}). We obtain
\[\sum\limits_{n=0}^{\infty }{{{(-1)}^{n}}\mathrm{pod}_{-3}(9n+6){{q}^{n}}} \equiv \frac{\psi {{({{q}^{3}})}^{4}}}{\psi {{(q)}^{7}}} \equiv - \sum\limits_{n=0}^{\infty }{{{(-1)}^{n}}\mathrm{pod}_{-3}(81n+51){{q}^{n}}} \pmod{9}.\]
The first congruence follows immediately.

The second congruence can be obtained in a similar way by letting $\alpha =1, 2$ in (\ref{mod2}).
\end{proof}

\begin{lemma}\label{psi8}
The coefficients of ${{q}^{3n+2}}$ in the series expansion of $\frac{1}{\psi {{(q)}^{8}}}$ is divisible by $9$.
\end{lemma}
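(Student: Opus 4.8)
The plan is to reduce $1/\psi(q)^8$ modulo $9$ to a series so sparse that exponents $\equiv 2 \pmod{3}$ simply do not occur. The engine is the congruence
\[\psi(q)^9 \equiv \psi(q^3)^3 \pmod{9},\]
which I would derive from Lemma \ref{basic}. Granting this, since $\psi(q^3)^3$ has constant term $1$ it is invertible in $\mathbb{Z}[[q]]$, and the congruence passes to reciprocals, giving $1/\psi(q)^9 \equiv 1/\psi(q^3)^3 \pmod{9}$. Multiplying by the integer series $\psi(q)$ then yields
\[\frac{1}{\psi(q)^8} = \frac{\psi(q)}{\psi(q)^9} \equiv \frac{\psi(q)}{\psi(q^3)^3} \pmod{9}.\]

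Next I would invoke the dissection $\psi(q) = A(q^3) + q\psi(q^9)$ recorded at the start of this section. Dividing by $\psi(q^3)^3$, which is a power series in $q^3$, gives
\[\frac{1}{\psi(q)^8} \equiv \frac{A(q^3)}{\psi(q^3)^3} + q\,\frac{\psi(q^9)}{\psi(q^3)^3} \pmod{9}.\]
On the right, the first summand is a power series in $q^3$, so all of its exponents are $\equiv 0 \pmod{3}$, while the second summand is $q$ times a power series in $q^3$, so all of its exponents are $\equiv 1 \pmod{3}$. No term has exponent $\equiv 2 \pmod{3}$; hence every coefficient of $q^{3n+2}$ in $1/\psi(q)^8$ is divisible by $9$, which is the assertion.

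It remains to establish $\psi(q)^9 \equiv \psi(q^3)^3 \pmod{9}$, where the only real work lies. Writing $\psi(q) = (q^2;q^2)_\infty^2/(q;q)_\infty$, I have $\psi(q)^9 = (q^2;q^2)_\infty^{18}/(q;q)_\infty^9$. Lemma \ref{basic} with $p=3$, $\alpha=2$ gives $(q;q)_\infty^9 \equiv (q^3;q^3)_\infty^3 \pmod{9}$ and, after $q \mapsto q^2$, $(q^2;q^2)_\infty^9 \equiv (q^6;q^6)_\infty^3 \pmod{9}$; squaring the latter gives $(q^2;q^2)_\infty^{18} \equiv (q^6;q^6)_\infty^6 \pmod{9}$. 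Passing to reciprocals in the first congruence and combining, I obtain $\psi(q)^9 \equiv (q^6;q^6)_\infty^6/(q^3;q^3)_\infty^3 = \psi(q^3)^3 \pmod{9}$, as needed. The main technical point to watch is the legitimacy of squaring, multiplying, and inverting power-series congruences modulo $9$, all of which are valid here because the relevant series have constant term $1$. As an alternative entirely in the spirit of the proof of Theorem \ref{pod3mod9alpha}, one could instead start from $1/\psi(q)^8 = \frac{\psi(q^9)^8}{\psi(q^3)^{32}}(s^2 - qst + q^2t^2)^8$, note that the prefactor and $s,t$ are all series in $q^3$, collect the terms of $(s^2 - qst + q^2t^2)^8$ whose explicit power of $q$ is $\equiv 2 \pmod{3}$, and check that the resulting multinomial coefficients $36,\,-504,\,1107,\,-504,\,36$ are each divisible by $9$; this is routine but longer.
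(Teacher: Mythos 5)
Your proof is correct, but your main route is genuinely different from the one the paper takes; in fact, the ``alternative'' you sketch at the end \emph{is} the paper's proof. The paper writes $\frac{1}{\psi(q)^{8}}=\frac{\psi(q^{9})^{8}}{\psi(q^{3})^{32}}\,(s^{2}-qst+q^{2}t^{2})^{8}$ using \eqref{sttwo}, extracts the $q^{3k+2}$ part of the trinomial power, and observes that it equals $9q^{2}(4s^{14}t^{2}-56q^{3}s^{11}t^{5}+123q^{6}s^{8}t^{8}-56q^{9}s^{5}t^{11}+4q^{12}s^{2}t^{14})$ --- exactly the multinomial totals $36,-504,1107,-504,36$ you quote, each visibly divisible by $9$. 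Your primary argument instead rests on $\psi(q)^{9}\equiv\psi(q^{3})^{3}\pmod 9$ (a clean consequence of Lemma \ref{basic} with $p=3$, $\alpha=2$, applied to both $(q;q)_{\infty}$ and $(q^{2};q^{2})_{\infty}$), which reduces $1/\psi(q)^{8}$ to $\psi(q)/\psi(q^{3})^{3}$ modulo $9$; the $3$-dissection $\psi(q)=A(q^{3})+q\psi(q^{9})$ then shows the residue class $q^{3n+2}$ is simply absent. All the power-series manipulations you flag (squaring, inverting, multiplying congruences) are legitimate since every series involved has constant term $1$. What your route buys is conceptual economy: no expansion of an eighth power of a trinomial and no case-by-case divisibility check, at the cost of invoking the dissection of $\psi$ rather than Lemma \ref{identity1}; the paper's computation, by contrast, stays uniform with the machinery already set up for Theorems \ref{pod3mod9id} and \ref{pod3mod9alpha}, where the same trinomial expansions are reused repeatedly. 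Either proof is acceptable.
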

\begin{proof}
By  (\ref{sttwo}) we have
\[\frac{1}{\psi {{(q)}^{8}}}=\frac{\psi {{({{q}^{9}})}^{8}}}{\psi {{({{q}^{3}})}^{32}}}{\big(s^{2}-qst+q^2t^2\big)}^{8}.\]
Collecting all the terms of the form ${{q}^{3k+2}}$ in ${{({{s}^{2}}-qst+{{q}^{2}}{{t}^{2}})}^{8}}$,  we obtain
\[9{{q}^{2}}(4{{s}^{14}}{{t}^{2}}-56{{q}^{3}}{{s}^{11}}{{t}^{5}}+123{{q}^{6}}{{s}^{8}}{{t}^{8}}-56{{q}^{9}}{{s}^{5}}{{t}^{11}}+4{{q}^{12}}{{s}^{2}}{{t}^{14}}).\]
It is obvious that all the coefficients are divisible by 9.
\end{proof}
Now we are able to prove the following theorem.
\begin{theorem}\label{pod3mod9}
For any integers $\alpha \ge 1$ and $n\ge 0$, we have
\[\mathrm{pod}_{-3}\Big({{3}^{2\alpha +2}}n+\frac{23\times {{3}^{2\alpha +1}}+3}{8}\Big)\equiv 0 \pmod{9}.\]
\end{theorem}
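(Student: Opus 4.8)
The plan is to read off the case $n \equiv 2 \pmod 3$ of the congruence identity (\ref{mod2}) from Theorem \ref{pod3mod9alpha} and to show, via Lemma \ref{psi8}, that the corresponding part of its right-hand side is divisible by $9$. First I would start from (\ref{mod2}), namely
\[\sum\limits_{n=0}^{\infty }{{{(-1)}^{n}}\mathrm{pod}_{-3}\Big({{3}^{2\alpha +1}}n+\frac{7\times {{3}^{2\alpha +1}}+3}{8}\Big){{q}^{n}}}\equiv {{(-1)}^{\alpha -1}}\frac{\psi {{({{q}^{3}})}^{5}}}{\psi {{(q)}^{8}}}  \pmod{9},\]
and compare the coefficients of $q^{3k+2}$ on both sides.

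The key step is to control the $q^{3k+2}$ part of the right-hand side. Since $\psi(q^3)^5$ is a power series in $q^3$, every monomial $q^{3k+2}$ appearing in $\frac{\psi(q^3)^5}{\psi(q)^8}$ is a product of a term $q^{3j}$ coming from $\psi(q^3)^5$ with a term $q^{3m+2}$ coming from $\frac{1}{\psi(q)^8}$. By Lemma \ref{psi8} the coefficient of every such $q^{3m+2}$ in $\frac{1}{\psi(q)^8}$ is divisible by $9$; multiplying by a series in $q^3$ does not alter the residue class modulo $3$ of the exponents, so this divisibility survives. Hence the coefficient of every $q^{3k+2}$ in $\frac{\psi(q^3)^5}{\psi(q)^8}$ is divisible by $9$, and the $q^{3k+2}$ part of the right-hand side of (\ref{mod2}) is $\equiv 0 \pmod 9$.

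Comparing with the left-hand side, the coefficient of $q^{3k+2}$ forces
\[\mathrm{pod}_{-3}\Big({{3}^{2\alpha +1}}(3k+2)+\frac{7\times {{3}^{2\alpha +1}}+3}{8}\Big)\equiv 0 \pmod{9},\]
and a short arithmetic simplification of the argument,
\[{{3}^{2\alpha +1}}(3k+2)+\frac{7\times {{3}^{2\alpha +1}}+3}{8}={{3}^{2\alpha +2}}k+\frac{23\times {{3}^{2\alpha +1}}+3}{8},\]
turns this into exactly the asserted congruence after renaming $k$ as $n$. There is essentially no genuine obstacle here beyond bookkeeping: the heavy lifting is already performed by Lemma \ref{psi8} together with identity (\ref{mod2}). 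The only point that needs a careful word is the observation just made, that factoring out the series $\psi(q^3)^5$ in $q^3$ preserves the residue of the exponents modulo $3$, so that the mod-$9$ divisibility of the relevant coefficients of $\frac{1}{\psi(q)^8}$ is inherited by $\frac{\psi(q^3)^5}{\psi(q)^8}$.
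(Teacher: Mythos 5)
Your proposal is correct and follows essentially the same route as the paper: extract the $q^{3k+2}$ terms from (\ref{mod2}), invoke Lemma \ref{psi8}, and do the arithmetic simplification of the argument. The one point you spell out more carefully than the paper — that multiplying $\frac{1}{\psi(q)^{8}}$ by the series $\psi(q^{3})^{5}$ in $q^{3}$ preserves the divisibility by $9$ of the coefficients of $q^{3k+2}$ — is exactly the observation the paper leaves implicit, so there is no substantive difference.
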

\begin{proof}
By Lemma \ref{psi8}, we know the coefficient of the term ${{q}^{3n+2}}$ in the left hand side of (\ref{mod2}) is divisible by 9. Hence we have
\[\mathrm{pod}_{-3}\Big({{3}^{2\alpha +1}}(3n+2)+\frac{7\times {{3}^{2\alpha +1}}+3}{8}\Big)\equiv 0 \pmod{9}.\]
This is the desired result we want.
\end{proof}
Note that for different integers $\alpha  \ge 1$,  the arithmetic sequences $\big\{{{3}^{2\alpha +2}}n+\frac{23\times {{3}^{2\alpha +1}}+3}{8}: n=0,1,2, \cdots \big\}$ mentioned in Theorem \ref{pod3mod9} are disjoint, and they account for
\[\frac{1}{{{3}^{4}}}+\frac{1}{{{3}^{6}}}+\cdots =\frac{1}{72}\]
of all nonnegative integers. Thus we have the following corollary.
\begin{corollary}
 $\mathrm{pod}_{-3}(n)$  is divisible by $9$ for  at least $1/72$ of all nonnegative integers.
\end{corollary}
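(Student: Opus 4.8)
The plan is to deduce the corollary purely from Theorem \ref{pod3mod9} by a natural-density argument, so no new generating-function manipulation is needed. For each $\alpha \ge 1$ I would set
\[ M_\alpha = 3^{2\alpha+2}, \qquad c_\alpha = \frac{23\times 3^{2\alpha+1}+3}{8}, \]
and let $S_\alpha = \{ M_\alpha n + c_\alpha : n \ge 0\}$ be the progression appearing in the theorem, so that every $m \in S_\alpha$ satisfies $\mathrm{pod}_{-3}(m) \equiv 0 \pmod 9$. A direct check gives $0 \le c_\alpha < M_\alpha$ for $\alpha \ge 1$ (equivalently $69\times 3^{2\alpha}+3 < 72\times 3^{2\alpha}$), so $c_\alpha$ is exactly the residue of the progression modulo $M_\alpha$; hence $S_\alpha$ has natural density $1/M_\alpha = 3^{-(2\alpha+2)}$ among the nonnegative integers. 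The whole argument then reduces to showing that the $S_\alpha$ are pairwise disjoint and summing their densities.

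First I would prove disjointness. Fix $\alpha < \beta$. Since $M_\alpha \mid M_\beta$, any common element would force $c_\beta \equiv c_\alpha \pmod{M_\alpha}$, so it suffices to show $M_\alpha \nmid (c_\beta - c_\alpha)$. Writing
\[ c_\beta - c_\alpha = \frac{23\times 3^{2\alpha+1}\bigl(3^{2(\beta-\alpha)}-1\bigr)}{8}, \]
I would compute its $3$-adic valuation: the factors $23$, $3^{2(\beta-\alpha)}-1$ and $8$ are all coprime to $3$, so the valuation is exactly $2\alpha+1$. As $M_\alpha = 3^{2\alpha+2}$ requires valuation at least $2\alpha+2$, we get $M_\alpha \nmid (c_\beta - c_\alpha)$, whence $S_\alpha \cap S_\beta = \emptyset$. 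This valuation computation is the only genuinely substantive step, and I expect it to be the main (though modest) obstacle, since everything else is bookkeeping.

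With disjointness in hand, the conclusion is a geometric-series estimate. Let $S = \bigcup_{\alpha \ge 1} S_\alpha$ be the set of all integers covered by the theorem. For any finite $A$, the union $\bigcup_{\alpha=1}^{A} S_\alpha$ is a disjoint union of arithmetic progressions, so its natural density equals $\sum_{\alpha=1}^{A} 3^{-(2\alpha+2)}$; since $S \supseteq \bigcup_{\alpha=1}^{A} S_\alpha$, the lower density of $S$ is at least this partial sum for every $A$. Letting $A \to \infty$ yields
\[ \sum_{\alpha=1}^{\infty} 3^{-(2\alpha+2)} = 3^{-4}\cdot\frac{1}{1-3^{-2}} = \frac{1}{72}. \]
Because every $m \in S$ satisfies $9 \mid \mathrm{pod}_{-3}(m)$, it follows that $\mathrm{pod}_{-3}(n)$ is divisible by $9$ for at least $1/72$ of all nonnegative integers, which is exactly the claim.
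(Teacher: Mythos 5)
Your proof is correct and follows essentially the same route as the paper: the progressions from Theorem \ref{pod3mod9} are pairwise disjoint and their densities sum to $\sum_{\alpha\ge 1}3^{-(2\alpha+2)}=1/72$. The only difference is that you actually verify the disjointness (via the $3$-adic valuation $v_3(c_\beta-c_\alpha)=2\alpha+1<2\alpha+2$) and the normalization $0\le c_\alpha<M_\alpha$, both of which the paper merely asserts; these checks are correct.
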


\section{Arithmetic Relations Between $\mathrm{pod}(n)$ and $\mathrm{pod}_{-3}(n)$}

There are  some nontrivial arithmetic relations between $\mathrm{pod}(n)$ and $\mathrm{pod}_{-3}(n)$. First we recall some congruence identities discovered  in \cite{hisc}.
\begin{lemma}\label{pod3n2}(Cf.\ \cite[Section 3]{hisc}.)
$\sum\limits_{n=0}^{\infty }{{{(-1)}^{n}}\mathrm{pod}(3n+2){{q}^{n}}}=\frac{\psi {{({{q}^{3}})}^{3}}}{\psi {{(q)}^{4}}}.$
\end{lemma}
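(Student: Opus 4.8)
The plan is to obtain this identity as an exact $3$-dissection of the sign-twisted generating function for $\mathrm{pod}(n)$, in direct parallel with the computation already carried out for $\mathrm{pod}_{-3}$ in the proof of Theorem \ref{pod3mod9id}. First I would record that, since $\sum_{n\ge 0}\mathrm{pod}(n)q^n=1/\psi(-q)$, replacing $q$ by $-q$ gives the series $\sum_{n\ge 0}(-1)^n\mathrm{pod}(n)q^n=1/\psi(q)$. This is exactly the $k=1$ analogue of (\ref{pod03exansion}), and it is the object whose $3$-dissection produces the claim.

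Next I would feed this into the key identity (\ref{sttwo}), writing
\[
\sum_{n=0}^{\infty}(-1)^n\mathrm{pod}(n)q^n=\frac{1}{\psi(q)}=\frac{\psi(q^9)}{\psi(q^3)^4}\bigl(s^2-qst+q^2t^2\bigr),
\]
with $s=A(q^3)$ and $t=\psi(q^9)$ as fixed throughout the paper. The crucial structural observation is that the prefactor $\psi(q^9)/\psi(q^3)^4$ and both $s$ and $t$ are power series in $q^3$ alone; hence the residue of the exponent modulo $3$ is governed entirely by the explicit monomials $1,\,q,\,q^2$ appearing in the factor $s^2-qst+q^2t^2$. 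Consequently the only contribution to the terms of the form $q^{3n+2}$ comes from $q^2t^2$, and collecting those terms yields
\[
\sum_{n=0}^{\infty}(-1)^{3n+2}\mathrm{pod}(3n+2)q^{3n+2}=q^2\frac{\psi(q^9)}{\psi(q^3)^4}\,t^2=q^2\frac{\psi(q^9)^3}{\psi(q^3)^4}.
\]

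To finish I would cancel the factor $q^2$ (noting $(-1)^{3n+2}=(-1)^n$) and then replace $q^3$ by $q$, which turns $\psi(q^9)^3/\psi(q^3)^4$ into $\psi(q^3)^3/\psi(q)^4$ and produces exactly the asserted identity. I expect no genuine obstacle: unlike the modulo $9$ arguments elsewhere in the paper, this is an \emph{exact} identity requiring no reduction and no appeal to (\ref{stone}) or Lemma \ref{basic}, so the entire content collapses to the single dissection step. The only point demanding care is the bookkeeping of which residue class each monomial lands in, together with the elementary verification that $s$, $t$, and the prefactor really are series in $q^3$, so that they do not redistribute the exponents across residue classes.
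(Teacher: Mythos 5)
Your proof is correct and complete: since $s=A(q^3)$, $t=\psi(q^9)$, and the prefactor $\psi(q^9)/\psi(q^3)^4$ are all series in $q^3$, the $q^{3n+2}$ terms of $1/\psi(q)=\sum(-1)^n\mathrm{pod}(n)q^n$ do come solely from $q^2t^2$ in (\ref{sttwo}), and the cancellation and substitution steps are all valid. The paper itself offers no proof of this lemma (it is quoted from \cite[Section 3]{hisc}), but your exact $3$-dissection is precisely the argument of that reference and the same machinery the paper deploys in Theorem \ref{pod3mod9id}, so it is the intended proof.
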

\begin{lemma}\label{podmod9id}(Cf.\ \cite[Section 4]{hisc}.)
\begin{displaymath}
\begin{split}
\sum\limits_{n=0}^{\infty }{{{(-1)}^{n}}\mathrm{pod}(9n+2){{q}^{n}}}&\equiv A{{(q)}^{2}}\frac{\psi {{({{q}^{3}})}^{2}}}{\psi {{(q)}^{5}}} \pmod{9},\\
\sum\limits_{n=0}^{\infty }{{{(-1)}^{n}}\mathrm{pod}(9n+5){{q}^{n}}}&\equiv 4A(q)\frac{\psi {{({{q}^{3}})}^{3}}}{\psi {{(q)}^{5}}} \pmod{9},\\
\sum\limits_{n=0}^{\infty }{{{(-1)}^{n}}\mathrm{pod}(9n+8){{q}^{n}}}&\equiv \frac{\psi {{({{q}^{3}})}^{4}}}{\psi {{(q)}^{5}}} \pmod{9}.
\end{split}
\end{displaymath}
\end{lemma}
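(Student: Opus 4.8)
The plan is to bootstrap from the exact identity in Lemma \ref{pod3n2},
\[\sum_{n=0}^\infty (-1)^n \mathrm{pod}(3n+2)q^n = \frac{\psi(q^3)^3}{\psi(q)^4},\]
and to extract from it the three arithmetic progressions modulo $3$ of the summation index. Since $9n+2 = 3(3n)+2$, $9n+5 = 3(3n+1)+2$ and $9n+8 = 3(3n+2)+2$, the three target series are precisely the $q^{3k}$, $q^{3k+1}$ and $q^{3k+2}$ components of this single generating function. First I would raise (\ref{sttwo}) to the fourth power to expand $1/\psi(q)^4$, so that
\[\frac{\psi(q^3)^3}{\psi(q)^4} = \frac{\psi(q^9)^4}{\psi(q^3)^{13}}(s^2 - qst + q^2 t^2)^4,\]
and then invoke the trisection of $(s^2-qst+q^2t^2)^4$ already recorded in (\ref{sqt4}). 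Because $s=A(q^3)$ and $t=\psi(q^9)$ are power series in $q^3$, the three bracketed groups in (\ref{sqt4}) — those carrying $q^0$, $q^1$ and $q^2$ — are exactly the three dissection pieces I want.

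The core of the argument is then a separate reduction modulo $9$ of each piece, in every case aiming to produce a factor $(s^3+q^3t^3)^2$ that can be removed through (\ref{stone}). For the $q^{3k}$ part the coefficients $1,-16,10$ reduce to $1,2,1$ modulo $9$, whence $s^8-16q^3 s^5 t^3+10q^6 s^2 t^6 \equiv s^2(s^3+q^3t^3)^2$; the $q^{3k+2}$ part collapses the same way to $t^2(s^3+q^3t^3)^2$. In both cases I would substitute (\ref{stone}) to replace $(s^3+q^3t^3)^2$ by $\psi(q^3)^8/\psi(q^9)^2$, cancel the common factors, divide off the relevant power of $q$, and send $q^3 \mapsto q$ to read out the first and third congruences.

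The main obstacle is the middle ($q^{3k+1}$) part, where the sought perfect square is not visible termwise. After pulling out $st$ one is left with $-4s^6+19q^3 s^3 t^3-4q^6 t^6$, and the key observation is the exact identity
\[-4s^6+19q^3 s^3 t^3-4q^6 t^6 = -4(s^3+q^3t^3)^2 + 27\,q^3 s^3 t^3,\]
so that only the vanishing of the term $27\,q^3 s^3 t^3$ modulo $9$ reduces it to $-4(s^3+q^3t^3)^2$. Granting this, the same substitution via (\ref{stone}), together with the sign flip $(-1)^{3n+1}=-(-1)^n$, converts the coefficient $-4$ into the factor $4$ appearing in the statement and completes the proof.
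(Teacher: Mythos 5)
Your proof is correct. Note that the paper does not prove this lemma at all---it is quoted from Hirschhorn and Sellers \cite[Section 4]{hisc}---but your derivation (applying (\ref{sttwo}) to Lemma \ref{pod3n2}, trisecting via (\ref{sqt4}), reducing each piece to a multiple of $(s^3+q^3t^3)^2$ modulo $9$, with the identity $-4s^6+19q^3s^3t^3-4q^6t^6=-4(s^3+q^3t^3)^2+27q^3s^3t^3$ handling the middle piece, and then eliminating $(s^3+q^3t^3)^2$ by (\ref{stone})) is exactly the dissection machinery the paper uses in its own proofs of Theorems \ref{pod3mod9id} and \ref{pod3mod9alpha}, and all the coefficient reductions check out.
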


\begin{theorem}
For any integer $n\ge 0$,  we have
\begin{align} \nonumber
  & \mathrm{pod}_{-3}(3n+2)\equiv 6 \, \mathrm{pod}(9n+5)  \pmod{9}, \\ \nonumber
 & \mathrm{pod}_{-3}(3n+1)\equiv 3 \, \mathrm{pod}(9n+2) \pmod{18}.
\end{align}
\end{theorem}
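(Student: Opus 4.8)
The plan is to handle both congruences by a single generating-function comparison modulo $9$, and then to upgrade the second one to modulus $18$ by a separate argument modulo $2$, assembling the two via the Chinese Remainder Theorem (since $18=2\cdot 9$).

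For the modulo $9$ statements I would simply read off the relevant series. For the first congruence, Theorem \ref{pod3mod9id} gives $\sum(-1)^n\mathrm{pod}_{-3}(3n+2)q^n\equiv -3\psi(q^3)\psi(q)A(q)\pmod 9$, whereas Lemma \ref{podmod9id} gives $6\sum(-1)^n\mathrm{pod}(9n+5)q^n\equiv 24A(q)\psi(q^3)^3/\psi(q)^5\equiv -3A(q)\psi(q^3)^3/\psi(q)^5\pmod 9$, using $24\equiv -3$. Both sides are $-3$ times a theta quotient, so their congruence modulo $9$ reduces to the congruence of the cofactors modulo $3$; since $A(q)$ is an invertible power series and $\psi(q)^3\equiv\psi(q^3)\pmod 3$, both cofactors collapse to $\psi(q)^4A(q)$ modulo $3$, and the factor $(-1)^n$ cancels coefficientwise. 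The modulo $9$ half of the second congruence is identical in spirit: $3\psi(q)A(q)^2$ and $3A(q)^2\psi(q^3)^2/\psi(q)^5$ agree modulo $9$ because their cofactors both reduce to $\psi(q)A(q)^2$ modulo $3$, yielding $\mathrm{pod}_{-3}(3n+1)\equiv 3\,\mathrm{pod}(9n+2)\pmod 9$.

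The substantive part, and the main obstacle, is the modulo $2$ refinement $\mathrm{pod}_{-3}(3n+1)\equiv\mathrm{pod}(9n+2)\pmod 2$. Here I would return to the exact $3$-dissections rather than their reductions modulo $9$. From \eqref{pod03exansion} and \eqref{sqt3}, the $q^{3k+1}$ part of $\psi(q)^{-3}$ equals $q\,\psi(q^9)^3\psi(q^3)^{-12}(-3s^5t+6q^3s^2t^4)$; reducing coefficients modulo $2$ kills the $6q^3s^2t^4$ term and turns $-3$ into $1$, leaving $q\,\psi(q^9)^4A(q^3)^5\psi(q^3)^{-12}$, so that $\sum\mathrm{pod}_{-3}(3n+1)q^n\equiv \psi(q^3)^4A(q)^5/\psi(q)^{12}\pmod 2$. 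Likewise, starting from Lemma \ref{pod3n2}, I write $\psi(q^3)^3/\psi(q)^4=\psi(q^9)^4\psi(q^3)^{-13}(s^2-qst+q^2t^2)^4$ and extract the $q^{3k}$ part via \eqref{sqt4}; the even coefficients $-16$ and $10$ vanish modulo $2$ and only $s^8$ survives, giving $\sum\mathrm{pod}(9n+2)q^n\equiv\psi(q^3)^4A(q)^8/\psi(q)^{13}\pmod 2$. These two theta quotients differ precisely by the factor $A(q)^3/\psi(q)$.

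It therefore remains to prove the key fact $A(q)^3\equiv\psi(q)\pmod 2$, which is the one genuinely new ingredient. I would obtain it from Lemma \ref{basic}: writing $A(q)^3/\psi(q)=(q^2;q^2)_\infty(q^3;q^3)_\infty^6\,(q;q)_\infty^{-2}(q^6;q^6)_\infty^{-3}$ and applying $(q;q)_\infty^2\equiv(q^2;q^2)_\infty$ together with $(q^3;q^3)_\infty^2\equiv(q^6;q^6)_\infty$ modulo $2$, every factor cancels and the quotient is $\equiv 1\pmod 2$. Hence the two generating functions above coincide modulo $2$, so $\mathrm{pod}_{-3}(3n+1)\equiv\mathrm{pod}(9n+2)\pmod 2$ (the signs $(-1)^n$ being irrelevant modulo $2$). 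Combining this with $\mathrm{pod}_{-3}(3n+1)\equiv 3\,\mathrm{pod}(9n+2)\pmod 9$ through the Chinese Remainder Theorem, and noting $3\,\mathrm{pod}(9n+2)\equiv\mathrm{pod}(9n+2)\pmod 2$, produces the congruence modulo $18$. The only delicate bookkeeping is the parity reduction of the dissection coefficients and the verification of $A(q)^3\equiv\psi(q)\pmod 2$; all other steps are routine.
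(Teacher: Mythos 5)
Your proposal is correct and follows essentially the same route as the paper: the two congruences modulo $9$ are read off by comparing Theorem \ref{pod3mod9id} with Lemma \ref{podmod9id} via $\psi(q^3)^2\equiv\psi(q)^6\pmod 3$, and the refinement to modulus $18$ comes from the same modulo-$2$ comparison of the $3$-dissections \eqref{sqt3} and \eqref{sqt4} together with the identity $A(q)^3\equiv\psi(q)\pmod 2$ (which the paper derives from $A(q)\equiv(q;q)_\infty$ and $\psi(q)\equiv(q;q)_\infty^3$, equivalent to your direct cancellation). No gaps.
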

\begin{proof}
By Lemma \ref{basic}, we have $\psi {{({{q}^{3}})}^{2}}\equiv \psi {{(q)}^{6}}$ (mod $3$). Hence
\[24A(q)\frac{\psi {{({{q}^{3}})}^{3}}}{\psi {{(q)}^{5}}}\equiv -3\psi ({{q}^{3}})\psi (q)A(q) \pmod{9}.\]
Hence by the second identity in Lemma \ref{podmod9id} and the third identity in Theorem \ref{pod3mod9id},  we get
\[\mathrm{pod}_{-3}(3n+2)\equiv 6 \, \mathrm{pod}(9n+5) \pmod{9}.\]

Now we turn to the proof of the second congruence.  Since $\psi {{({{q}^{3}})}^{2}}\equiv \psi {{(q)}^{6}}$ (mod $3$), we have
\[3A{{(q)}^{2}}\frac{\psi {{({{q}^{3}})}^{2}}}{\psi {{(q)}^{5}}}\equiv 3A{{(q)}^{2}}\psi (q)  \pmod{9}.\]
Hence by the first identity in Lemma \ref{podmod9id} and the second identity in Theorem \ref{pod3mod9id}, we obtain
\[\mathrm{pod}_{-3}(3n+1)\equiv 3 \, \mathrm{pod}(9n+2) \pmod{9}.\]

  The remaining task is to show that $\mathrm{pod}_{-3}(3n+1)\equiv \mathrm{pod}(9n+2)$ (mod $2$).

  By Lemma \ref{pod3n2} and (\ref{sttwo}),  we have
\begin{equation}\label{twoterm1}
\sum\limits_{n=0}^{\infty }{{{(-1)}^{n}}\mathrm{pod}(3n+2){{q}^{n}}}=\frac{\psi {{({{q}^{3}})}^{3}}}{\psi {{(q)}^{4}}}=\frac{\psi {{({{q}^{9}})}^{4}}}{\psi {{({{q}^{3}})}^{13}}}{{({{s}^{2}}-qst+{{q}^{2}}{{t}^{2}})}^{4}}.
\end{equation}
Collecting all the  terms  of the form ${{q}^{3n}}$ in the right hand side of  (\ref{twoterm1}),  by (\ref{sqt4}) we obtain
\[\frac{\psi {{({{q}^{9}})}^{4}}}{\psi {{({{q}^{3}})}^{13}}}({{s}^{8}}-16{{q}^{3}}{{s}^{5}}{{t}^{3}}+10{{q}^{6}}{{s}^{2}}{{t}^{6}})\equiv \frac{\psi {{({{q}^{9}})}^{4}}}{\psi {{({{q}^{3}})}^{13}}}A{{({{q}^{3}})}^{8}} \pmod{2}.\]
Hence we have
\[\sum\limits_{n=0}^{\infty }{{{(-1)}^{3n}}\mathrm{pod}(9n+2){{q}^{3n}}}\equiv \frac{\psi {{({{q}^{9}})}^{4}}}{\psi {{({{q}^{3}})}^{13}}}A{{({{q}^{3}})}^{8}} \pmod{2}.\]
Replacing ${{q}^{3}}$ by $q$,  we get
\begin{equation}\label{podmod2}
\sum\limits_{n=0}^{\infty }{{{(-1)}^{n}}\mathrm{pod}(9n+2){{q}^{n}}}\equiv \frac{\psi {{({{q}^{3}})}^{4}}}{\psi {{(q)}^{13}}}A{{(q)}^{8}} \pmod{2}.
\end{equation}

Back to the expression of $\sum\limits_{n=0}^{\infty }{{{(-1)}^{n}}\mathrm{pod}_{-3}(n){{q}^{n}}}$ in (\ref{pod03exansion}),  from (\ref{sqt3}) we have
\begin{align} \nonumber
   \sum\limits_{n=0}^{\infty }{{{(-1)}^{3n+1}}\mathrm{pod}_{-3}(3n+1){{q}^{3n+1}}}
 & =q\frac{\psi {{({{q}^{9}})}^{3}}}{\psi {{({{q}^{3}})}^{12}}}(-3{{s}^{5}}t+6{{q}^{3}}{{s}^{2}}{{t}^{4}})\\ \nonumber
 & \equiv q\frac{\psi {{({{q}^{9}})}^{4}}}{\psi {{({{q}^{3}})}^{12}}}A{{({{q}^{3}})}^{5}} \pmod{2}.
\end{align}
Now dividing both sides by $-q$ and replacing ${{q}^{3}}$ by $q$, we get
\begin{equation}\label{pod-3mod2}
\sum\limits_{n=0}^{\infty }{{{(-1)}^{n}}\mathrm{pod}_{-3}(3n+1){{q}^{n}}}\equiv \frac{\psi {{({{q}^{3}})}^{4}}}{\psi {{(q)}^{12}}}A{{(q)}^{5}} \pmod{2}.
\end{equation}
By Lemma \ref{basic}, we have ${{({{q}^{2}};{{q}^{2}})}_{\infty }}\equiv {{(q;q)}_{\infty}^{2}}$ (mod $2$) and $({{q}^{3}};{{q}^{3}})_{\infty }^{2}\equiv {{({{q}^{6}};{{q}^{6}})}_{\infty }}$ (mod $2$). Hence
\[A(q)=\frac{{{({{q}^{2}};{{q}^{2}})}_{\infty }}({{q}^{3}};{{q}^{3}})_{\infty }^{2}}{{{(q;q)}_{\infty }}{{({{q}^{6}};{{q}^{6}})}_{\infty }}}\equiv {{(q;q)}_{\infty }} \pmod{2},\]
and
\[\psi (q)=\frac{({{q}^{2}};{{q}^{2}})_{\infty }^{2}}{{{(q;q)}_{\infty }}}\equiv (q;q)_{\infty }^{3} \pmod{2}.\]
Therefore, we have $A{{(q)}^{3}}\equiv \psi (q)$ (mod $2$). Thus
\[\frac{\psi {{({{q}^{3}})}^{4}}}{\psi {{(q)}^{13}}}A{{(q)}^{8}}\equiv \frac{\psi {{({{q}^{3}})}^{4}}}{\psi {{(q)}^{12}}}A{{(q)}^{5}} \pmod{2}.\]
From (\ref{podmod2}), (\ref{pod-3mod2}) and the congruence above, we deduce that $\mathrm{pod}_{-3}(3n+1)\equiv \mathrm{pod}(9n+2)$ (mod $2$), which completes our proof.
\end{proof}

\section{Congruences Modulo 7 And 11}

For nonnegative integers $n$ and $k$,  let ${{r}_{k}}(n)$ denote the number of representations of $n$ as sum of $k$ squares, and ${{t}_{k}}(n)$ denote  the number of representations of $n$ as  sum of $k$ triangular numbers. Let $\sigma (n)$ denote the sum of positive divisors of $n$.

The method used in this section is quite similar to the one used in \cite{Wang2}, where the author applied the properties of $r_{k}(n)$ to give new congruences satisfied by overpartition function.
\begin{lemma}\label{t4t8}
For any prime $p \ge 3$,  we have
\[{{t}_{4}}\Big(pn+\frac{p-1}{2}\Big)\equiv {{t}_{4}}(n) \pmod{p},  \quad  {{t}_{8}}(pn+p-1)\equiv {{t}_{8}}(n) \pmod{p}.\]
\end{lemma}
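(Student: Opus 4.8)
The plan is to reduce both congruences to classical closed-form evaluations of $t_4$ and $t_8$ together with a single elementary fact about divisor sums modulo $p$. Recall that $\psi(q)^{k}=\sum_{n\ge 0}t_{k}(n)q^{n}$, and that there are well-known formulas $t_{4}(n)=\sigma(2n+1)$ and $t_{8}(n)=\sigma_{3}(n+1)-\sigma_{3}\big(\tfrac{n+1}{2}\big)$, where $\sigma_{3}(m)=\sum_{d\mid m}d^{3}$ and $\sigma_{3}$ is understood to vanish at non-integer arguments. The first is the Gauss--Legendre evaluation for four triangular numbers and the second is the corresponding classical formula for eight triangular numbers; I would quote both from the literature rather than reprove them. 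Granting these, the lemma becomes a purely arithmetic statement about $\sigma$ and $\sigma_{3}$.

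The key elementary ingredient I would isolate first is: for any prime $p$, any $k\ge 1$, and any positive integer $N$, one has $\sigma_{k}(pN)\equiv\sigma_{k}(N)\pmod p$. To see this, write $N=p^{a}m$ with $p\nmid m$, so that $pN=p^{a+1}m$; by multiplicativity $\sigma_{k}(pN)=\sigma_{k}(p^{a+1})\sigma_{k}(m)$ and $\sigma_{k}(N)=\sigma_{k}(p^{a})\sigma_{k}(m)$, and since $\sigma_{k}(p^{\,j})=1+p^{k}+\cdots+p^{jk}\equiv 1\pmod p$ for every $j\ge 0$, both sides are congruent to $\sigma_{k}(m)$ modulo $p$. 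The decisive point is that this holds whether or not $p\mid N$, which is what makes the whole argument uniform.

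For the first congruence I would simply compute, using this fact with $k=1$ and $N=2n+1$, that $t_{4}\big(pn+\tfrac{p-1}{2}\big)=\sigma\big(2(pn+\tfrac{p-1}{2})+1\big)=\sigma\big(p(2n+1)\big)\equiv\sigma(2n+1)=t_{4}(n)\pmod p$. For the second, the argument shifts by $1$ to $pn+p=p(n+1)$, so $t_{8}(pn+p-1)=\sigma_{3}\big(p(n+1)\big)-\sigma_{3}\big(\tfrac{p(n+1)}{2}\big)$. Since $p$ is odd, $p(n+1)/2$ is an integer exactly when $(n+1)/2$ is; applying the fact with $k=3$ to $N=n+1$ and (when it is an integer) to $N=(n+1)/2$ yields $\sigma_{3}\big(p(n+1)\big)\equiv\sigma_{3}(n+1)$ and $\sigma_{3}\big(\tfrac{p(n+1)}{2}\big)\equiv\sigma_{3}\big(\tfrac{n+1}{2}\big)$ modulo $p$, whence $t_{8}(pn+p-1)\equiv t_{8}(n)\pmod p$.

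I expect the only genuine obstacle to be securing the closed forms for $t_{4}$ and $t_{8}$; once those are in hand the remainder is bookkeeping. The single subtlety to watch is the parity case-split hidden in the $\sigma_{3}\big(\tfrac{n+1}{2}\big)$ term, but the uniform congruence $\sigma_{k}(pN)\equiv\sigma_{k}(N)\pmod p$ dispatches both the odd and even cases without extra effort, so I would present that congruence as a standalone observation before invoking it twice.
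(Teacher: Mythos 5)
Your proof is correct and follows essentially the same route as the paper: quote the classical evaluations of $t_4$ and $t_8$ and reduce the resulting divisor sums modulo $p$. The only cosmetic difference is that you write $t_8(n)=\sigma_3(n+1)-\sigma_3\bigl(\tfrac{n+1}{2}\bigr)$ where the paper uses the equivalent odd-divisor form $\sum_{d\mid(n+1),\,d\ \mathrm{odd}}\bigl(\tfrac{n+1}{d}\bigr)^3$, and your standalone observation $\sigma_k(pN)\equiv\sigma_k(N)\pmod{p}$ (via multiplicativity) is the same fact the paper establishes by discarding the divisors that are multiples of $p$.
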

\begin{proof}
From Theorem  3.6.3 in \cite{Bruce}, we know ${{t}_{4}}(n)=\sigma (2n+1).$
For any positive integer $N$, we have
\[\sigma (N)=\sum\limits_{d|N, \, p|d}{d}+\sum\limits_{d|N, \,p \nmid d}{d}\equiv \sum\limits_{d|N, \, p \nmid d}{d} \pmod{p}.\]
Let $N=2n+1$ and $N=p(2n+1)$ respectively.  It is  easy to see that $\sigma (p(2n+1))\equiv \sigma (2n+1)$ (mod $p$), which clearly implies the first congruence.

 From equation (3.8.3) in page 81 of \cite{Bruce}, we know
 \[{{t}_{8}}(n)=\sum\limits_{\begin{smallmatrix}
 d|(n+1) \\
 d \, \mathrm{odd}
\end{smallmatrix}}{{{\Big(\frac{n+1}{d}\Big)}^{3}}}. \]
Hence similarly we can prove the second congruence.
\end{proof}
\begin{lemma}\label{rsrelate}(Cf.\ \cite{relation}.)
For $1\le k\le 7$, we have
\[{{r}_{k}}(8n+k)={{2}^{k}}\Big(1+\frac{1}{2}{k \choose 4}\Big){{t}_{k}}(n) .\]
\end{lemma}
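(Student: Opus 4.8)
The plan is to pass to theta-function generating functions. Writing $\varphi(q)=\sum_{n=-\infty}^{\infty}q^{n^2}$ for the classical theta function, one has $\sum_{n\ge 0}r_k(n)q^n=\varphi(q)^k$, while the paper's $\psi$ gives $\sum_{n\ge 0}t_k(n)q^n=\psi(q)^k$. The starting point is the dissection of $\varphi$ according to the parity of the summation index. Splitting $x=2y$ and $x=2y+1$ and using $(2y+1)^2=8\cdot\frac{y(y+1)}{2}+1$ together with $\sum_{y\in\mathbb{Z}}q^{y(y+1)/2}=2\psi(q)$ (the involution $y\mapsto -1-y$ is fixed-point free, so each triangular value is hit exactly twice), I would first establish
\[\varphi(q)=\varphi(q^4)+2q\,\psi(q^8).\]
This single identity encodes the precise link between squares and triangular numbers that the lemma quantifies.

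Next I would raise this to the $k$-th power and expand by the binomial theorem:
\[\varphi(q)^k=\sum_{j=0}^{k}\binom{k}{j}2^j q^j\,\varphi(q^4)^{k-j}\psi(q^8)^j.\]
Since $\varphi(q^4)^{k-j}$ involves only exponents divisible by $4$ and $\psi(q^8)^j$ only exponents divisible by $8$, the $j$-th summand contributes to $q^m$ only when $m\equiv j\pmod 4$. To read off the coefficient of $q^{8n+k}$ I therefore need $j\equiv k\pmod 4$, and because $0\le j\le k\le 7$ the only surviving indices are $j=k$ and (when $k\ge 4$) $j=k-4$. This parity bookkeeping is exactly why the hypothesis $1\le k\le 7$ appears and why the correction is governed by $\binom{k}{4}$.

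Finally I would evaluate the two surviving terms. The $j=k$ term is $2^k q^k\psi(q^8)^k$, whose coefficient of $q^{8n+k}$ is $2^k t_k(n)$, supplying the main term. For $j=k-4$ the contribution is $\binom{k}{4}2^{k-4}$ times the coefficient of $q^{8n+4}$ in $\varphi(q^4)^4\psi(q^8)^{k-4}$, that is $\binom{k}{4}2^{k-4}\sum_{a+2b=2n+1}r_4(a)t_{k-4}(b)$, where necessarily $a$ is odd. Here is the one genuinely arithmetic input: by Jacobi's four-square theorem $r_4(a)=8\sigma(a)$ for odd $a$, and by $t_4(m)=\sigma(2m+1)$ (Lemma \ref{t4t8}) this reads $r_4(2a'+1)=8\,t_4(a')$. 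Substituting $a=2a'+1$ turns the sum into $8\sum_{a'+b=n}t_4(a')t_{k-4}(b)$, which is $8$ times the coefficient of $q^n$ in $\psi(q)^4\psi(q)^{k-4}=\psi(q)^k$, namely $8\,t_k(n)$. Collecting the two pieces yields $r_k(8n+k)=\big(2^k+\binom{k}{4}2^{k-1}\big)t_k(n)=2^k\big(1+\tfrac12\binom{k}{4}\big)t_k(n)$, as claimed. I expect the $j=k-4$ term to be the main obstacle: the rest is formal theta-series manipulation, but closing that term honestly requires invoking Jacobi's theorem to re-express $r_4$ on odd integers as a divisor sum and hence as $t_4$, and then recognizing the resulting Cauchy product as the coefficient of $\psi(q)^k$.
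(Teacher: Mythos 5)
Your argument is correct, and it is worth noting that the paper itself offers no proof of this lemma at all --- it is quoted verbatim from Barrucand--Cooper--Hirschhorn \cite{relation}, so there is nothing internal to compare against. Your route is in fact the standard one from that reference: the 2-dissection $\varphi(q)=\varphi(q^4)+2q\,\psi(q^8)$ (your fixed-point-free involution argument for $\sum_{y\in\mathbb{Z}}q^{y(y+1)/2}=2\psi(q)$ is fine), the binomial expansion of $\varphi(q)^k$, and the observation that only $j\equiv k\pmod 4$ survives when extracting $q^{8n+k}$, which correctly isolates $j=k$ and $j=k-4$ and explains both the restriction $k\le 7$ and the appearance of $\binom{k}{4}$. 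The closing of the $j=k-4$ term is also sound: for $a$ odd, Jacobi gives $r_4(a)=8\sigma(a)$, and $\sigma(2a'+1)=t_4(a')$ turns the Cauchy product $\sum_{a'+b=n}t_4(a')t_{k-4}(b)$ into the coefficient of $q^n$ in $\psi(q)^k$, i.e.\ $8t_k(n)$, whence $2^k+8\cdot 2^{k-4}\binom{k}{4}=2^k\bigl(1+\tfrac12\binom{k}{4}\bigr)$. One cosmetic point: the identity $t_4(n)=\sigma(2n+1)$ is not the statement of Lemma \ref{t4t8} but an ingredient quoted inside its proof (from Berndt), so you should cite it as such rather than as that lemma.
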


\begin{theorem}\label{pod3mod7}
For any integer $n\ge 0$, we have
\[\mathrm{pod}_{-3}(7n+3)\equiv {{(-1)}^{n+1}}{{r}_{3}}(8n+3) \pmod{7}.\]
\end{theorem}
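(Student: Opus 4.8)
The plan is to reduce $1/\psi(q)^3$ modulo $7$ to an expression whose coefficient of $q^{7n+3}$ can be read off, and then to invoke the representation-theoretic lemmas for $t_4$ and $r_3$. The starting point is identity (\ref{pod03exansion}), which gives $\sum_{n}(-1)^n\mathrm{pod}_{-3}(n)q^n = 1/\psi(q)^3$. Since $\psi(q)^7\equiv\psi(q^7)\pmod 7$ (the case $p=7$ of the consequence of Lemma \ref{basic} recorded after it), I would write
\[
\frac{1}{\psi(q)^3}=\frac{\psi(q)^4}{\psi(q)^7}\equiv\frac{\psi(q)^4}{\psi(q^7)}\pmod 7.
\]

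Next I would extract the terms $q^{7n+3}$ from the right-hand side. Writing $\psi(q)^4=\sum_m t_4(m)q^m$ and $1/\psi(q^7)=\sum_i e_i q^{7i}$ with $e_i=[q^i]\frac{1}{\psi(q)}$, the denominator contributes only exponents divisible by $7$, so only the terms of $\psi(q)^4$ with exponent $\equiv 3\pmod 7$ survive, giving
\[
[q^{7n+3}]\frac{\psi(q)^4}{\psi(q^7)}=\sum_i e_i\,t_4\bigl(7(n-i)+3\bigr).
\]
Here I would apply Lemma \ref{t4t8} with $p=7$, which yields $t_4(7k+3)\equiv t_4(k)\pmod 7$, turning the right-hand side into $\sum_i e_i t_4(n-i)\pmod 7$. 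This last sum is exactly the coefficient of $q^n$ in $\frac{1}{\psi(q)}\cdot\psi(q)^4=\psi(q)^3$, namely $t_3(n)$. Comparing with the coefficient $(-1)^{7n+3}\mathrm{pod}_{-3}(7n+3)=(-1)^{n+1}\mathrm{pod}_{-3}(7n+3)$ on the left then gives $(-1)^{n+1}\mathrm{pod}_{-3}(7n+3)\equiv t_3(n)\pmod 7$.

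Finally I would translate $t_3(n)$ into $r_3$. By Lemma \ref{rsrelate} with $k=3$ (so that $\binom{3}{4}=0$), one has $r_3(8n+3)=8\,t_3(n)\equiv t_3(n)\pmod 7$. Multiplying the previous congruence through by $(-1)^{n+1}$, and using $\bigl((-1)^{n+1}\bigr)^2=1$, produces
\[
\mathrm{pod}_{-3}(7n+3)\equiv(-1)^{n+1}t_3(n)\equiv(-1)^{n+1}r_3(8n+3)\pmod 7,
\]
as claimed.

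I expect the main obstacle to be organizing the coefficient extraction correctly, and in particular recognizing that after applying the $t_4$-congruence the convolution $\sum_i e_i t_4(n-i)$ collapses back to the coefficient of $\psi(q)^3$. The congruence $t_4(7k+3)\equiv t_4(k)\pmod 7$ is the linchpin of the whole argument; it ultimately holds because $t_4(n)=\sigma(2n+1)$, which is precisely the input provided by Lemma \ref{t4t8}.
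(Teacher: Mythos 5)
Your proposal is correct and follows essentially the same route as the paper: both rest on $\psi(q)^7\equiv\psi(q^7)\pmod 7$ from Lemma \ref{basic}, the congruence $t_4(7n+3)\equiv t_4(n)\pmod 7$ from Lemma \ref{t4t8}, and the identity $r_3(8n+3)=8\,t_3(n)$ from Lemma \ref{rsrelate}. The only difference is cosmetic: the paper clears the denominator by multiplying through by $\psi(q)^7$ and then divides by $\psi(q)$ at the end, whereas you keep $1/\psi(q^7)$ in place and carry out the same extraction as an explicit convolution.
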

\begin{proof}
Let $p=7$ in Lemma \ref{t4t8}. We deduce that ${{t}_{4}}(7n+3)\equiv {{t}_{4}}(n)$ (mod $7$).

From definition we have
\[\psi {{(q)}^{7}}\sum\limits_{n=0}^{\infty }{\mathrm{pod}_{-3}(n){{(-q)}^{n}}}=\psi {{(q)}^{4}}=\sum\limits_{n=0}^{\infty }{{{t}_{4}}(n){{q}^{n}}}.\]
By Lemma \ref{basic}, we have $\psi {{(q)}^{7}}\equiv \psi ({{q}^{7}})$ (mod $7$). Replacing $\psi {{(q)}^{7}}$ by $\psi ({{q}^{7}})$ modulo 7 and extracting those terms of the form ${{q}^{7n+3}}$ in the above identity, we obtain
\[\psi ({{q}^{7}})\sum\limits_{n=0}^{\infty }{\mathrm{pod}_{-3}(7n+3){{(-q)}^{7n+3}}}\equiv \sum\limits_{n=0}^{\infty }{{{t}_{4}}(7n+3){{q}^{7n+3}}} \pmod{7}.\]
  Dividing both sides by $-{{q}^{3}}$, then replacing ${{q}^{7}}$ by $q$ and replacing $t_{4}(7n+3)$ by $t_{4}(n)$,  we get
\[\sum\limits_{n=0}^{\infty }{\mathrm{pod}_{-3}(7n+3){{(-q)}^{n}}}\equiv -\psi {{(q)}^{3}}\equiv -\sum\limits_{n=0}^{\infty }{{{t}_{3}}(n){{q}^{n}}} \pmod{7}.\]
Hence we have $\mathrm{pod}_{-3}(7n+3)\equiv {{(-1)}^{n+1}}{{t}_{3}}(n)$ (mod $7$).

Let $k=3$ in Lemma \ref{rsrelate}. We obtain ${{t}_{3}}(n)={{r}_{3}}(8n+3)/8$,  from which the theorem follows.
\end{proof}

We can deduce some interesting congruences from this theorem.

\begin{lemma}\label{r3relation}(Cf.\ \cite{3square}.)
Let $p \ge 3$ be a prime. For any  integers $n \ge 1$  and $\alpha \ge 0$,  we have
\[{{r}_{3}}({{p}^{2 \alpha }}n)=\Bigg(\frac{{{p}^{\alpha +1}}-1}{p-1}-\Big(\frac{-n}{p}\Big)\frac{{{p}^{\alpha }}-1}{p-1}\Bigg){{r}_{3}}(n)-p\frac{{{p}^{\alpha }}-1}{p-1}{{r}_{3}}(n/{{p}^{2}}).\]
Here $(\frac{\cdot }{p})$ denotes the Legendre symbol,  and we take ${{r}_{3}}(n/{{p}^{2}})=0$ unless  ${{p}^{2}} | n$.
\end{lemma}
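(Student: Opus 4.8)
The plan is to reduce the entire closed-form identity to a single three-term recursion, the case $\alpha = 1$, and then obtain the general $\alpha$ by solving a linear recurrence. Throughout I work with the generating function $\theta(q)^3 = \sum_{n\ge 0} r_3(n) q^n$, where $\theta(q) = \sum_{m\in\mathbb{Z}} q^{m^2}$ is the standard theta series; recall that $\theta$ is a modular form of weight $1/2$ on $\Gamma_0(4)$, so $\theta^3$ has weight $3/2$. The arithmetic of $r_3(n)$ is governed by the action of the Shimura--Hecke operators $T_{p^2}$ on this space, and this is precisely what encodes the $p^2$-dilation behaviour we need.

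First I would establish the fundamental recursion
\[ r_3(p^2 n) = \Big( p + 1 - \Big(\frac{-n}{p}\Big)\Big) r_3(n) - p\, r_3(n/p^2), \]
valid for all $n \ge 1$ (with $r_3(n/p^2) = 0$ unless $p^2 \mid n$). For weight $k + 1/2 = 3/2$ (so $k = 1$) the operator $T_{p^2}$ acts on Fourier coefficients by $a(n) \mapsto a(p^2 n) + (\frac{-n}{p}) a(n) + p\, a(n/p^2)$. Since $M_{3/2}(\Gamma_0(4))$ is one-dimensional and spanned by $\theta^3$, the latter is automatically a $T_{p^2}$-eigenform; its eigenvalue is $p+1$, as one sees from the Shimura lift to the weight-$2$ Eisenstein series (whose $T_p$-eigenvalue is $\sigma_1(p) = p+1$) or by evaluating the coefficient relation at a single small $n$. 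Rearranging the eigenform relation $a(p^2 n) + (\frac{-n}{p}) a(n) + p\, a(n/p^2) = (p+1) a(n)$ then yields the displayed recursion. Alternatively, one may bypass modular forms and argue classically: by Gauss's theorem $r_3(n)$ is expressed through Hurwitz class numbers $H(\cdot)$ (with the usual case split modulo $8$), and the identity above is the standard relation between the class number of a discriminant and that of $p^2$ times it.

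Granting the fundamental recursion, the general case is purely formal. Fix $n$ and set $f(\alpha) = r_3(p^{2\alpha} n)$. For $\alpha \ge 1$ we have $p \mid p^{2\alpha} n$, so the Legendre symbol $(\frac{-p^{2\alpha} n}{p})$ vanishes; applying the fundamental recursion at $m = p^{2\alpha} n$ therefore collapses to the two-step recurrence
\[ f(\alpha + 1) = (p+1) f(\alpha) - p\, f(\alpha - 1), \qquad \alpha \ge 1. \]
Its characteristic polynomial $x^2 - (p+1)x + p = (x-1)(x-p)$ has roots $1$ and $p$, so $f(\alpha) = A + B p^\alpha$. Matching the data $f(0) = r_3(n)$ and $f(1) = (p + 1 - (\frac{-n}{p})) r_3(n) - p\, r_3(n/p^2)$ gives $A = \tfrac{p f(0) - f(1)}{p-1}$ and $B = \tfrac{f(1) - f(0)}{p-1}$, whence $f(\alpha) = \tfrac{(p - p^\alpha) f(0) + (p^\alpha - 1) f(1)}{p-1}$. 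Substituting $f(0)$ and $f(1)$ and collecting the coefficient of $r_3(n)$ (using $\tfrac{p - p^\alpha}{p-1} + (p+1)\tfrac{p^\alpha - 1}{p-1} = \tfrac{p^{\alpha+1} - 1}{p-1}$) reproduces exactly the stated formula.

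The \emph{main obstacle} is the fundamental recursion itself; once it is in hand the rest is a routine linear-recurrence solve. All the genuine arithmetic lives in the claim that $\theta^3$ transforms as a $T_{p^2}$-eigenform with eigenvalue $p + 1$, equivalently that $r_3$ obeys the $p^2$-dilation relation. Making this fully rigorous requires either controlling $M_{3/2}(\Gamma_0(4))$ together with Shimura's half-integral Hecke theory, or carrying out the class-number computation with careful attention to the residues modulo $8$ and the normalization of $H(\cdot)$ in the ramified, split, and inert cases of $p$. Since the statement is quoted from \cite{3square}, in the final writeup I would cite that source for the fundamental recursion and present only the elementary induction in full.
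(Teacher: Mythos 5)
The paper offers no proof of this lemma at all: it is imported verbatim from the cited reference \cite{3square} (Hirschhorn--Sellers), so there is no internal argument to compare yours against. Judged on its own merits, your proposal is correct. The formal part --- fixing $n$, setting $f(\alpha)=r_3(p^{2\alpha}n)$, observing that the Legendre symbol vanishes once $p\mid p^{2\alpha}n$ so that the fundamental recursion collapses to $f(\alpha+1)=(p+1)f(\alpha)-pf(\alpha-1)$, and then solving via the characteristic roots $1$ and $p$ --- checks out completely; in particular the coefficient identity $\tfrac{p-p^\alpha}{p-1}+(p+1)\tfrac{p^\alpha-1}{p-1}=\tfrac{p^{\alpha+1}-1}{p-1}$ is right and the matching at $\alpha=0,1$ reproduces the stated formula. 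The fundamental recursion itself is also correctly stated (e.g.\ $r_3(9)=30=(3+1-(\tfrac{-1}{3}))r_3(1)$ confirms the eigenvalue $p+1$), and your two proposed justifications --- the $T_{p^2}$-eigenform property of $\theta^3$ via $\dim M_{3/2}(\Gamma_0(4))=1$, or Gauss's class-number formula --- are both standard and viable, though each would need its supporting facts (the dimension count and the eigenvalue computation, or the case analysis modulo $8$) written out to be fully rigorous. The one structural remark worth making is that your induction does not actually shorten the appeal to \cite{3square}: the reference already proves the full $\alpha$-dependent statement by elementary means, so citing it only for the $\alpha=1$ case and then re-deriving the rest is a legitimate but not a labor-saving reorganization. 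If you do present the argument, the half-integral-weight Hecke route is the cleaner of your two options, but it imports machinery far heavier than anything else in this paper, which otherwise works entirely with $q$-series dissections.
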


\begin{theorem}
Let $p$ be a prime and $p\equiv 6$ \text{\rm{(mod $7$)}}. Let $N$ be a positive integer which is coprime to $p$ and ${{p}^{3}}N\equiv 3$ \text{\rm{(mod $8$)}}. We have
\[\mathrm{pod}_{-3}\Big(\frac{7{{p}^{3}}N+3}{8}\Big)\equiv 0 \pmod{7}.\]
\end{theorem}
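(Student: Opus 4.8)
The plan is to reduce the statement to Theorem \ref{pod3mod7}, which already rewrites $\mathrm{pod}_{-3}$ modulo $7$ in terms of $r_3$, and then to kill the resulting $r_3$-value using the recurrence in Lemma \ref{r3relation}. First I would verify that the argument has the correct shape. The hypothesis $p^3 N \equiv 3 \pmod 8$ forces $7 p^3 N + 3 \equiv 7\cdot 3 + 3 = 24 \equiv 0 \pmod 8$, so
\[
m := \frac{7 p^3 N + 3}{8}
\]
is a nonnegative integer. Writing $m = 7n+3$ then forces $n = \frac{p^3 N - 3}{8}$, which is again an integer by the same congruence, and a direct computation gives $8n+3 = p^3 N$. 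Hence Theorem \ref{pod3mod7} applies verbatim and yields
\[
\mathrm{pod}_{-3}(m) \equiv (-1)^{n+1}\, r_3(p^3 N) \pmod 7,
\]
so the entire problem collapses to showing $r_3(p^3 N) \equiv 0 \pmod 7$.

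Next I would invoke Lemma \ref{r3relation} with $\alpha = 1$ and with the lemma's parameter ``$n$'' taken to be $pN$, so that $p^{2\alpha} n = p^2 \cdot pN = p^3 N$. Using $\frac{p^2-1}{p-1} = p+1$ and $\frac{p-1}{p-1} = 1$, the lemma reads
\[
r_3(p^3 N) = \Big( (p+1) - \Big(\frac{-pN}{p}\Big) \Big)\, r_3(pN) - p\, r_3(N/p).
\]
Two simplifications now occur simultaneously. The Legendre symbol $\big(\frac{-pN}{p}\big)$ vanishes because its argument is a multiple of $p$. The term $r_3(N/p)$ vanishes because $\gcd(N,p)=1$ means $N/p$ is not an integer, so the lemma's convention $r_3(\,\cdot\,/p^2)=0$ (applied here after the cancellation $pN/p^2 = N/p$) forces it to $0$. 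What remains is the clean identity $r_3(p^3 N) = (p+1)\, r_3(pN)$.

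Finally, since $p \equiv 6 \pmod 7$ we have $p+1 \equiv 7 \equiv 0 \pmod 7$, whence $r_3(p^3 N) = (p+1)\, r_3(pN) \equiv 0 \pmod 7$. Combined with the congruence extracted from Theorem \ref{pod3mod7}, this gives $\mathrm{pod}_{-3}(m) \equiv 0 \pmod 7$, as desired; note that the sign $(-1)^{n+1}$ is irrelevant once the $r_3$-factor is divisible by $7$. I do not anticipate a genuine obstacle, since the argument is a short chain of reductions. The only points needing care are purely bookkeeping: checking that both integrality requirements (for $m$ and for $n$) follow from the single congruence $p^3 N \equiv 3 \pmod 8$, and reading Lemma \ref{r3relation} correctly so that \emph{both} the Legendre-symbol term and the $r_3(N/p)$ term drop out, leaving exactly the factor $p+1$ that the condition $p \equiv 6 \pmod 7$ is designed to annihilate.
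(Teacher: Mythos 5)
Your proposal is correct and follows essentially the same route as the paper: both apply Lemma \ref{r3relation} with $\alpha=1$ and $n=pN$ to get $r_3(p^3N)=(p+1)r_3(pN)\equiv 0\pmod 7$, and then feed this into Theorem \ref{pod3mod7} with $n=\frac{p^3N-3}{8}$. Your extra bookkeeping (checking integrality and explaining why the Legendre-symbol and $r_3(N/p)$ terms vanish) is accurate and merely makes explicit what the paper leaves implicit.
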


 \begin{proof}
Let $\alpha =1$ and $n=pN$ in Lemma \ref{r3relation}. We have
\[{{r}_{3}}({{p}^{3}}N)=(p+1){{r}_{3}}(pN)\equiv 0  \pmod{7}.\]
 Let $n=\frac{{{p}^{3}}N-3}{8}$.   By Theorem \ref{pod3mod7}, we deduce that
\[\mathrm{pod}_{-3}\Big(\frac{7{{p}^{3}}N+3}{8}\Big)=\mathrm{pod}_{-3}(7n+3)\equiv {{(-1)}^{n+1}}{{r}_{3}}({{p}^{3}}N)\equiv 0  \pmod{7}.\]
\end{proof}
For example,  let $p=13$ and $N=8n+7$,  where $n \not\equiv 4$ (mod $13$) is a nonnegative integer. We establish a Ramanujan-type congruence as follows:
\[\mathrm{pod}_{-3}(15379n+13457)\equiv 0  \pmod{7}.\]

\begin{theorem}
Let $p \ge 3$ be a prime,  $N$ a positive integer which is coprime to $p$ and $pN\equiv 3$ \text{\rm{(mod $8$)}}. Let $\alpha $ be any nonnegative integer.\\
(1) If $p\equiv 1$  \text{\rm{(mod $7$)}}, then
\[\mathrm{pod}_{-3}\Big(\frac{7{{p}^{14\alpha +13}}N+3}{8}\Big)\equiv 0 \pmod{7}.\]
(2) If $p \not\equiv 0$ \text{\rm{or}} $1$ \text{\rm{(mod $7$)}}, then
\[\mathrm{pod}_{-3}\Big(\frac{7{{p}^{12\alpha +11}}N+3}{8}\Big)\equiv 0 \pmod{7}.\]
\end{theorem}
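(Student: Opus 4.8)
The plan is to reduce both parts to the single arithmetic statement $r_3(p^k N)\equiv 0\pmod 7$ for the appropriate exponent $k$, first passing from $\mathrm{pod}_{-3}$ to $r_3$ via Theorem \ref{pod3mod7}, and then extracting the vanishing from the multiplicative recursion in Lemma \ref{r3relation}.

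First I would write $k=2\beta+1$ for an odd exponent to be chosen, and set $n=\frac{p^{k}N-3}{8}$. Since $p$ is odd, $p^{2\beta}=(p^{\beta})^{2}\equiv 1\pmod 8$, so $p^{k}N\equiv pN\equiv 3\pmod 8$ by hypothesis; hence $n$ is a nonnegative integer, $7n+3=\frac{7p^{k}N+3}{8}$, and $8n+3=p^{k}N$. Theorem \ref{pod3mod7} then gives
\[
\mathrm{pod}_{-3}\Big(\frac{7p^{k}N+3}{8}\Big)\equiv (-1)^{n+1}r_{3}(p^{k}N)\pmod 7,
\]
so it suffices to prove $r_{3}(p^{k}N)\equiv 0\pmod 7$ for the chosen $k$ (the sign is irrelevant since the target is $0$).

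Next I would invoke Lemma \ref{r3relation} with its exponent parameter equal to $\beta$ and its argument equal to $pN$, so that $p^{2\beta}\cdot pN=p^{2\beta+1}N=p^{k}N$. Because $p\mid pN$ we have $\big(\frac{-pN}{p}\big)=0$, and because $p\nmid N$ we have $p^{2}\nmid pN$, so the term $r_{3}(pN/p^{2})$ vanishes. The recursion therefore collapses to
\[
r_{3}(p^{k}N)=\frac{p^{\beta+1}-1}{p-1}\,r_{3}(pN)=(1+p+\cdots+p^{\beta})\,r_{3}(pN),
\]
reducing everything to showing that the geometric sum $1+p+\cdots+p^{\beta}$ is divisible by $7$ for the prescribed $\beta$.

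Finally I would run the case analysis on $p\bmod 7$. If $p\equiv 1\pmod 7$, then $1+p+\cdots+p^{\beta}\equiv \beta+1\pmod 7$, which vanishes exactly when $7\mid(\beta+1)$; taking $\beta=7\alpha+6$ gives $k=2\beta+1=14\alpha+13$, yielding (1). If $p\not\equiv 0,1\pmod 7$, then $p-1$ is invertible modulo $7$ and the sum equals $\frac{p^{\beta+1}-1}{p-1}$, which is $\equiv 0\pmod 7$ iff $p^{\beta+1}\equiv 1\pmod 7$; since the multiplicative order of $p$ modulo $7$ divides $6$ by Fermat's little theorem, it is enough that $6\mid(\beta+1)$, so taking $\beta=6\alpha+5$ gives $k=2\beta+1=12\alpha+11$, yielding (2). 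The only step demanding genuine care is the bookkeeping in the recursion, namely verifying that both the Legendre-symbol contribution and the $r_{3}(\,\cdot/p^{2})$ contribution drop out precisely because $pN$ is exactly divisible by the first power of $p$; once that is confirmed, the divisibility of the geometric sum delivers the result immediately.
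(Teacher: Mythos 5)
Your proposal is correct and follows essentially the same route as the paper: both reduce the claim via Theorem \ref{pod3mod7} to showing $r_3(p^kN)\equiv 0\pmod 7$, apply Lemma \ref{r3relation} with argument $pN$ (so the Legendre-symbol and $r_3(\cdot/p^2)$ terms drop out), and then choose the exponent so that the geometric sum $1+p+\cdots+p^\beta$ is divisible by $7$, splitting into the cases $p\equiv 1$ and $p\not\equiv 0,1\pmod 7$. Your write-up is in fact slightly more careful than the paper's in verifying the integrality of $n$ and the vanishing of the two extra terms in the recursion.
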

\begin{proof}
(1) Let $n=pN$ in Lemma \ref{r3relation},  and then we replace $\alpha $ by $7\alpha +6$.  Since $p\equiv 1$ (mod $7$),   we have
\[\frac{{{p}^{7\alpha +7}}-1}{p-1}=1+p+\cdots +{{p}^{7\alpha +6}}\equiv 0 \pmod{7}.\]
Hence ${{r}_{3}}({{p}^{14\alpha +13}}N)\equiv 0$ (mod $7$).

Let $n=\frac{{{p}^{14\alpha +13}}N-3}{8}$.  By Theorem \ref{pod3mod7}, we deduce that
\[\mathrm{pod}_{-3}\Big(\frac{7{{p}^{14\alpha +13}}N+3}{8}\Big)=\mathrm{pod}_{-3}(7n+3)\equiv {{(-1)}^{n+1}}{{r}_{3}}({{p}^{14\alpha +13}}N)\equiv 0 \pmod{7}.\]

(2) Let $n=pN$ in Lemma \ref{r3relation}, and then we replace $\alpha $ by $6\alpha +5$.  Note that ${{p}^{6\alpha +6}}\equiv 1$ (mod $7$),  we get ${{r}_{3}}({{p}^{12\alpha +11}}N)\equiv 0$ (mod $7$).

Let $n=\frac{{{p}^{12\alpha +11}}N-3}{8}$.  From Theorem \ref{pod3mod7} we obtain
\begin{displaymath}
\mathrm{pod}_{-3}\Big(\frac{7{{p}^{12\alpha +11}}N+3}{8}\Big)=\mathrm{pod}_{-3}(7n+3)\equiv {{(-1)}^{n+1}}{{r}_{3}}({{p}^{12\alpha +11}}N)\equiv 0 \pmod{7}.
\end{displaymath}
\quad
\end{proof}

\begin{theorem}
Let $a\in \{3,19,27\}$.  For any integers $n\ge 0$ and $\alpha \ge 1$,  we have
\[\mathrm{pod}_{-3}\Big({{7}^{2\alpha +2}}n+\frac{a\cdot {{7}^{2\alpha +1}}+3}{8}\Big)\equiv 0 \pmod{7}.\]
\end{theorem}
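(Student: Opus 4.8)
The plan is to reduce the statement to a divisibility property of $r_3$ via Theorem \ref{pod3mod7}, and then to extract that divisibility from the recursion in Lemma \ref{r3relation} specialized to $p=7$. First I would check that the argument $M={{7}^{2\alpha +2}}n+\frac{a\cdot {{7}^{2\alpha +1}}+3}{8}$ lies in the residue class $3\pmod 7$. Since $8\equiv 1\pmod 7$ and $7\mid 7^{2\alpha+1}$, one gets $\frac{a\cdot 7^{2\alpha+1}+3}{8}\equiv 3\pmod 7$, so $M\equiv 3\pmod 7$. Writing $M=7m+3$, a direct computation gives
\[m=7^{2\alpha+1}n+\frac{a\cdot 7^{2\alpha}-3}{8},\]
which is an integer because $a\equiv 3\pmod 8$ for each $a\in\{3,19,27\}$ and $7^{2\alpha}\equiv 1\pmod 8$. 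Applying Theorem \ref{pod3mod7} then yields $\mathrm{pod}_{-3}(M)\equiv(-1)^{m+1}r_3(8m+3)\pmod 7$, so it suffices to prove $7\mid r_3(8m+3)$.

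Next I would compute $8m+3=7^{2\alpha}(56n+a)$, which has the form $7^{2\alpha}\nu$ with $\nu=56n+a$. Here $\nu\equiv a\not\equiv 0\pmod 7$, so $\nu$ is coprime to $7$, and Lemma \ref{r3relation} applies with $p=7$ and the lemma's integer taken to be $\nu$. Reducing the coefficients modulo $7$ and using $\frac{7^{\alpha+1}-1}{6}\equiv\frac{7^{\alpha}-1}{6}\equiv 1\pmod 7$ while the final term $7\cdot\frac{7^\alpha-1}{6}\,r_3(\nu/49)$ carries an explicit factor of $7$, I obtain
\[r_3(7^{2\alpha}\nu)\equiv\Big(1-\Big(\tfrac{-\nu}{7}\Big)\Big)r_3(\nu)\pmod 7.\]

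Finally, the decisive observation is that the three admissible values of $a$ are exactly those for which $-\nu\equiv -a$ is a nonzero quadratic residue modulo $7$: indeed $-3\equiv 4$, $-19\equiv 2$, and $-27\equiv 1\pmod 7$, all of which lie in the set $\{1,2,4\}$ of quadratic residues. Hence $\big(\tfrac{-\nu}{7}\big)=1$ in every case, the factor $1-\big(\tfrac{-\nu}{7}\big)$ vanishes modulo $7$, and therefore $r_3(8m+3)\equiv 0\pmod 7$, giving $\mathrm{pod}_{-3}(M)\equiv 0\pmod 7$. I expect the only genuinely substantive point to be this last quadratic-residue check together with recognizing that one should rewrite $8m+3$ as $7^{2\alpha}(56n+a)$ to trigger Lemma \ref{r3relation}; the remainder is routine congruence bookkeeping, where the main care needed is tracking the divisibility-by-$8$ conditions so that every argument stays integral.
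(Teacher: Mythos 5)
Your proposal is correct and follows essentially the same route as the paper: reduce via Theorem \ref{pod3mod7} to showing $7\mid r_3(7^{2\alpha}(56n+a))$, then apply Lemma \ref{r3relation} with $p=7$ and observe that the coefficient $1-\big(\tfrac{-\nu}{7}\big)$ vanishes because $-a\bmod 7\in\{4,2,1\}$ are quadratic residues (the paper phrases this as $56n+a\equiv r\pmod 7$ with $r\in\{3,5,6\}$). Your write-up merely makes explicit the residue and integrality checks that the paper leaves as ``easy to deduce.''
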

\begin{proof}
Let $p=7$ and $n =7m+r$ ($r\in \{3,5,6\}$) in Lemma \ref{r3relation}.  It is easy to deduce that ${{r}_{3}}({{7}^{2\alpha }}(7m+r))\equiv 0$ (mod $7$).

Let $n={{7}^{2\alpha +1}}m+\frac{a\cdot {{7}^{2\alpha }}-3}{8}$ ($a\in \{3,19,27\}$) in Theorem \ref{pod3mod7}. We have
\[\mathrm{pod}_{-3}\Big({{7}^{2\alpha +2}}m+\frac{a\cdot {{7}^{2\alpha +1}}+3}{8}\Big)=\mathrm{pod}_{-3}(7n+3)\equiv {{{(-1)}^{n+1}}}{{r}_{3}}(8n+3) \pmod{7}.\]
Since ${{r}_{3}}(8n+3)={{r}_{3}}({{7}^{2\alpha }}(56m+a))\equiv 0$ (mod $7$), we complete our proof.
\end{proof}

Note that for different pairs $(\alpha ,a)$, $\alpha \ge 1$ and $ a\in \{3,19,27\}$, the arithmetic sequences
$\big\{{{7}^{2\alpha +2}}n+\frac{a\cdot {{7}^{2\alpha +1}}+3}{8}:n=0,1,2,\cdots \big\}$ are disjoint, and they account for
\[3\Big(\frac{1}{{{7}^{4}}}+\frac{1}{{{7}^{6}}}+\cdots +\frac{1}{7^{2\alpha+2}}+ \cdots \Big)=\frac{1}{784}\]
of all nonnegative integers. Thus we have
\begin{corollary}\label{least}
$\mathrm{pod}_{-3}(n)$ is divisible by $7$ for at least $1/784$ of all nonnegative integers.
\end{corollary}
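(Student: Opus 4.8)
The plan is to convert the infinite family of congruences just established into a statement about natural density, by showing that the relevant arithmetic progressions are pairwise disjoint and then summing a geometric series. First I would record that the preceding theorem supplies, for each pair $(\alpha,a)$ with $\alpha\ge 1$ and $a\in\{3,19,27\}$, an entire arithmetic progression
\[
S_{\alpha,a}=\Big\{\,7^{2\alpha+2}n+\tfrac{a\cdot 7^{2\alpha+1}+3}{8}:n\ge 0\,\Big\}
\]
on which $\mathrm{pod}_{-3}\equiv 0\pmod 7$. Since the set of integers $N$ with $\mathrm{pod}_{-3}(N)\equiv 0\pmod 7$ contains the union $\bigcup_{\alpha\ge 1}\bigcup_{a}S_{\alpha,a}$, its lower density is at least the total density of this union; the point is that this total density equals the sum of the individual densities \emph{provided} the $S_{\alpha,a}$ do not overlap, so that no integer is counted twice.

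The key step is therefore to verify pairwise disjointness. For $N\in S_{\alpha,a}$ one computes $8N-3=7^{2\alpha+1}(56n+a)$. Because $56\equiv 0\pmod 7$ while each $a\in\{3,19,27\}$ is coprime to $7$, the cofactor $56n+a$ is a unit modulo $7$, so the $7$-adic valuation satisfies $v_7(8N-3)=2\alpha+1$. This odd exponent is determined by $N$ alone, so membership in a given family $S_{\alpha,a}$ forces the value of $\alpha$; hence progressions with distinct $\alpha$ are disjoint automatically. For a fixed $\alpha$, if $N\in S_{\alpha,a}\cap S_{\alpha,a'}$ then $56n+a=56n'+a'$, whence $56\mid(a-a')$; since $|a-a'|\le 24<56$ for $a,a'\in\{3,19,27\}$, this forces $a=a'$ and then $n=n'$. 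Thus the $S_{\alpha,a}$ are pairwise disjoint.

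With disjointness in hand, each $S_{\alpha,a}$ is a single residue class modulo $7^{2\alpha+2}$ and so has natural density $7^{-(2\alpha+2)}$. Summing over the three admissible values of $a$ and then over $\alpha\ge 1$ gives
\[
\sum_{\alpha\ge 1}\frac{3}{7^{2\alpha+2}}=\frac{3}{7^{4}}\cdot\frac{1}{1-7^{-2}}=\frac{3}{7^{4}}\cdot\frac{49}{48}=\frac{1}{784},
\]
which is exactly the claimed density. I expect the main obstacle to be precisely this disjointness bookkeeping: one must rule out overlaps both between different values of $\alpha$ and within a fixed $\alpha$, and the clean way to do so is the valuation identity $8N-3=7^{2\alpha+1}(56n+a)$ rather than a direct comparison of the residues $\tfrac{a\cdot 7^{2\alpha+1}+3}{8}$ across the various moduli. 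Once disjointness is secured, the density evaluation is a routine geometric series and the lower-bound conclusion follows at once.
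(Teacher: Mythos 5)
Your proposal is correct and follows essentially the same route as the paper: the paper simply asserts that the progressions $S_{\alpha,a}$ are disjoint and sums the geometric series $3\sum_{\alpha\ge 1}7^{-(2\alpha+2)}=1/784$. Your valuation argument $v_7(8N-3)=2\alpha+1$ together with the comparison of $a,a'$ modulo $56$ is a clean justification of the disjointness claim that the paper leaves unproved, but it is a detail filled in rather than a different approach.
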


\begin{theorem}\label{pod3mod11}
For any integers $n\ge 0$,  we have
\[\mathrm{pod}_{-3}(11n+10)\equiv {{4(-1)}^{n}}{{r}_{7}}(8n+7)  \pmod{11}.\]
\end{theorem}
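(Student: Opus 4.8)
The plan is to mirror the proof of Theorem \ref{pod3mod7}, working modulo $11$ instead of $7$ and replacing the quadruple and triple of squares by an octuple and a septuple. First I would invoke Lemma \ref{t4t8} with $p=11$; since $p-1=10$, this yields $t_8(11n+10)\equiv t_8(n) \pmod{11}$, the congruence that lets me fold the extracted progression back onto itself. The starting point is the identity coming from (\ref{pod03exansion}), namely
\[\psi(q)^{11}\sum_{n=0}^{\infty}\mathrm{pod}_{-3}(n)(-q)^n=\psi(q)^{8}=\sum_{n=0}^{\infty}t_8(n)q^n,\]
using $\psi(q)^{11}/\psi(q)^{3}=\psi(q)^{8}$ together with the fact that $\psi(q)^{8}$ is the generating function for sums of eight triangular numbers.

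Next I would apply Lemma \ref{basic} in the form $\psi(q)^{11}\equiv\psi(q^{11}) \pmod{11}$ and extract the terms of the shape $q^{11n+10}$ from both sides. Because $\psi(q^{11})$ involves only powers $q^{11k}$, on the left these terms come exactly from $\mathrm{pod}_{-3}(11n+10)(-q)^{11n+10}$, and since $11n+10\equiv n \pmod 2$ we have $(-q)^{11n+10}=(-1)^n q^{11n+10}$; on the right every term already has exponent $\equiv 10 \pmod{11}$. Dividing through by $q^{10}$, replacing $q^{11}$ by $q$, and substituting $t_8(11n+10)\equiv t_8(n)$ gives
\[\psi(q)\sum_{n=0}^{\infty}(-1)^n\mathrm{pod}_{-3}(11n+10)q^{n}\equiv\sum_{n=0}^{\infty}t_8(n)q^n=\psi(q)^{8} \pmod{11}.\]
Since $\psi(q)$ is a unit in $\mathbb{Z}[[q]]$ (its constant term is $1$), I may cancel one factor of $\psi(q)$ to obtain
\[\sum_{n=0}^{\infty}(-1)^n\mathrm{pod}_{-3}(11n+10)q^{n}\equiv\psi(q)^{7}=\sum_{n=0}^{\infty}t_7(n)q^n \pmod{11},\]
whence $\mathrm{pod}_{-3}(11n+10)\equiv(-1)^n t_7(n) \pmod{11}$.

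Finally I would pass from triangular representations to squares via Lemma \ref{rsrelate} with $k=7$, which gives $r_7(8n+7)=2^{7}\big(1+\tfrac12{7\choose 4}\big)t_7(n)=2368\,t_7(n)$. Reducing modulo $11$ one finds $2368\equiv 3$, and since $3^{-1}\equiv 4 \pmod{11}$ this yields $t_7(n)\equiv 4\,r_7(8n+7) \pmod{11}$; inserting this into the previous congruence produces exactly $\mathrm{pod}_{-3}(11n+10)\equiv 4(-1)^n r_7(8n+7) \pmod{11}$. The argument is structurally routine given the template of Theorem \ref{pod3mod7}, so the only genuine points of care are the parity bookkeeping turning $(-q)^{11n+10}$ into $(-1)^n q^{11n+10}$ (note that the even residue $10$ produces no extra sign, unlike the residue $3$ in the mod $7$ case), the legitimacy of dividing the power series by $\psi(q)$, and the final modular inversion, which must reproduce the constant $4$ in the statement — and it is this last step where the specific arithmetic of $11$, rather than a generic prime, is essential.
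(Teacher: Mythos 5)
Your proposal is correct and follows essentially the same route as the paper: the identity $\psi(q)^{11}\sum\mathrm{pod}_{-3}(n)(-q)^n=\psi(q)^8$, the reduction $\psi(q)^{11}\equiv\psi(q^{11})\pmod{11}$, extraction of the $q^{11n+10}$ terms, the periodicity $t_8(11n+10)\equiv t_8(n)$ from Lemma \ref{t4t8}, cancellation of $\psi(q)$, and finally Lemma \ref{rsrelate} with $k=7$ together with $2368\equiv 3$ and $3^{-1}\equiv 4\pmod{11}$. All the arithmetic, including the sign bookkeeping $(-q)^{11n+10}=(-1)^nq^{11n+10}$, matches the paper's argument.
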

\begin{proof}
Let $p=11$ in Lemma \ref{t4t8}. We deduce that ${{t}_{8}}(11n+10)\equiv {{t}_{8}}(n)$ (mod $11$). By Lemma \ref{basic} we have $\psi {{(q)}^{11}} \equiv \psi ({{q}^{11}})$ (mod $11$).  Hence
\[\psi ({{q}^{11}})\sum\limits_{n=0}^{\infty }{\mathrm{pod}_{-3}(n){{(-q)}^{n}}}\equiv \psi {{(q)}^{8}}=\sum\limits_{n=0}^{\infty }{{{t}_{8}}(n){{q}^{n}}} \, \pmod{11}.\]
If we extract all the terms of the form ${{q}^{11n+10}}$, we obtain
\[\psi ({{q}^{11}})\sum\limits_{n=0}^{\infty }{\mathrm{pod}_{-3}(11n+10){{(-q)}^{11n+10}}}\equiv \sum\limits_{n=0}^{\infty }{{{t}_{8}}(11n+10){{q}^{11n+10}}} \pmod{11}.\]
Dividing both sides by ${{q}^{10}}$ and replacing ${{q}^{11}}$ by  $q$,  we have
\[\psi (q)\sum\limits_{n=0}^{\infty }{\mathrm{pod}_{-3}(11n+10){{(-q)}^{n}}}\equiv \sum\limits_{n=0}^{\infty }{{{t}_{8}}(11n+10){{q}^{n}}}\equiv \sum\limits_{n=0}^{\infty }{{{t}_{8}}(n){{q}^{n}}}=\psi {{(q)}^{8}} \pmod{11}.\]
Therefore,
\[\sum\limits_{n=0}^{\infty }{\mathrm{pod}_{-3}(11n+10){{(-q)}^{n}}}\equiv \psi {{(q)}^{7}}=\sum\limits_{n=0}^{\infty }{{{t}_{7}}(n){{q}^{n}}} \pmod{11}.\]
Comparing the coefficients of ${{q}^{n}}$ on both sides, we obtain
\[\mathrm{pod}_{-3}(11n+10)\equiv {{(-1)}^{n}}{{t}_{7}}(n) \pmod{11}.\]
Let $k=7$ in  Lemma \ref{rsrelate}. We deduce that ${{t}_{7}}(n)=\frac{1}{2368}{{r}_{7}}(8n+7)\equiv 4{{r}_{7}}(8n+7)$ (mod $11$). The theorem follows.
\end{proof}

\begin{lemma}\label{r7re}(Cf.\ \cite{Cooper}.)
Let $p \ge 3$ be a prime,  and $n$  a nonnegative integer such that  ${{p}^{2}} \nmid n$. For any integer $\alpha \ge 0$,  we have
\[{{r}_{7}}({{p}^{2\alpha }}n)=\Bigg(\frac{{{p}^{5\alpha +5}}-1}{{{p}^{5}}-1}-{{p}^{2}}\big(\frac{-n}{p}\big)\frac{{{p}^{5\alpha }}-1}{{{p}^{5}}-1}\Bigg){{r}_{7}}(n).\]
\end{lemma}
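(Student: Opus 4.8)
The plan is to realize $r_7(n)$ as the Fourier coefficients of a half-integral weight Hecke eigenform and to extract the identity from the resulting three-term Hecke recurrence. Write $\varphi(q)=\sum_{m=-\infty}^{\infty}q^{m^2}$, so that $\varphi(q)^7=\sum_{n=0}^{\infty}r_7(n)q^n$. Since $\varphi$ is a modular form of weight $1/2$ on $\Gamma_0(4)$, the power $\varphi^7$ lies in $M_{7/2}(\Gamma_0(4))$.

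First I would isolate the structural input on which everything rests: for every odd prime $p$, the Hecke operator $T_{p^2}$ acts on the whole space $M_{7/2}(\Gamma_0(4))$ as the scalar $1+p^5$. This is plausible because $M_{7/2}(\Gamma_0(4))$ is two-dimensional and contains no cusp forms — any cuspidal part would map, under the Shimura correspondence, into a space of weight-$6$ cusp forms of level dividing $2$, and all such spaces vanish — so the space is spanned by Eisenstein series, each of which Shimura-lifts to the weight-$6$ Eisenstein series whose $T_p$-eigenvalue is $1+p^5$. In particular $\varphi^7$ is a $T_{p^2}$-eigenform with eigenvalue $1+p^5$. Granting only that $\varphi^7$ is an eigenform, the eigenvalue is in any case forced by the constant term: evaluating the eigenform relation below at $n=0$ gives $(1+p^5)r_7(0)=\lambda_p r_7(0)$ with $r_7(0)=1$.

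Next I would write out the standard action of $T_{p^2}$ on Fourier coefficients in weight $k+\tfrac12$. With $k=3$, trivial nebentypus at odd $p$, and $(-1)^k=-1$, the eigenform relation $T_{p^2}\varphi^7=(1+p^5)\varphi^7$ reads, for every $n\ge0$,
\[
r_7(p^2 n)+\Big(\tfrac{-n}{p}\Big)p^2\, r_7(n)+p^5\, r_7(n/p^2)=(1+p^5)\,r_7(n),
\]
with the convention $r_7(n/p^2)=0$ when $p^2\nmid n$; the exponents $p^{k-1}=p^2$ and $p^{2k-1}=p^5$ are exactly those appearing in the claim.

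Finally I would fix $n$ with $p^2\nmid n$ and set $b_\alpha=r_7(p^{2\alpha}n)$. Applying the displayed relation with $n$ replaced by $p^{2(\alpha-1)}n$, I note that $\big(\tfrac{-p^{2(\alpha-1)}n}{p}\big)$ equals $\big(\tfrac{-n}{p}\big)$ when $\alpha=1$ and vanishes when $\alpha\ge2$ (since then $p\mid p^{2(\alpha-1)}n$), while $r_7(n/p^2)=0$ by hypothesis. This yields $b_1=\big(1+p^5-\big(\tfrac{-n}{p}\big)p^2\big)b_0$ together with the two-term recurrence $b_\alpha=(1+p^5)b_{\alpha-1}-p^5 b_{\alpha-2}$ for $\alpha\ge2$. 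Its characteristic equation $x^2-(1+p^5)x+p^5=0$ has roots $1$ and $p^5$, so $b_\alpha=A+Bp^{5\alpha}$; solving for $A,B$ from $b_0,b_1$ and regrouping produces exactly
\[
b_\alpha=\Bigg(\frac{p^{5\alpha+5}-1}{p^5-1}-p^2\Big(\tfrac{-n}{p}\Big)\frac{p^{5\alpha}-1}{p^5-1}\Bigg)r_7(n),
\]
as claimed; the boundary case $p\mid n$ (still with $p^2\nmid n$) is handled uniformly since there $\big(\tfrac{-n}{p}\big)=0$. The main obstacle is the structural step: rigorously establishing that $T_{p^2}$ acts as the scalar $1+p^5$ on $M_{7/2}(\Gamma_0(4))$ — the dimension count, the vanishing of the relevant weight-$6$ cusp forms, and the Shimura-correspondence eigenvalue matching — together with pinning down the correct half-integral-weight normalization of $T_{p^2}$. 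Once that is in place, the solution of the recurrence is entirely routine.
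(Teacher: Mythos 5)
The paper offers no proof of this lemma at all: it is quoted directly from Cooper's paper on sums of five, seven and nine squares, so your sketch has to stand on its own, and in outline it does. The Hecke-theoretic route is sound: $\varphi^7$ lies in $M_{7/2}(\Gamma_0(4))$, that space is two-dimensional with trivial cuspidal part (the Shimura lift of any cusp form would land in $S_6(\Gamma_0(2))=0$, and the lift is injective in weight $\ge 5/2$), the Eisenstein subspace is a $T_{p^2}$-eigenspace for every odd $p$, and your remark that the eigenvalue $1+p^5$ is then forced by the constant term is a clean way to avoid computing it from Cohen's Eisenstein series. The normalization of $T_{p^2}$ you quote (with $p^{k-1}=p^2$ and $p^{2k-1}=p^5$ for $k=3$) is the correct Shimura normalization, and the passage to the two-term recurrence $b_\alpha=(1+p^5)b_{\alpha-1}-p^5b_{\alpha-2}$ with $b_1=\bigl(1+p^5-\left(\frac{-n}{p}\right)p^2\bigr)b_0$, together with its closed-form solution, checks out, including the degenerate case $p\mid n$ with $p^2\nmid n$. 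The one genuine gap is the one you flag yourself: the scalar action of $T_{p^2}$ on the full Eisenstein space needs an actual argument (Cohen's computation of the Hecke action on half-integral-weight Eisenstein series, or an explicit check on a two-element basis), not just the plausibility appeal to the Shimura correspondence, whose behaviour on the Eisenstein part is not simply "injective with matching eigenvalues" without further justification. By way of comparison, Cooper's cited derivation is more explicit: it proceeds from exact divisor-sum formulas for $r_7(n)$ (ultimately Hardy's exact singular series for seven squares), from which the recursion in $\alpha$ is read off directly. Your approach trades that explicit input for structural facts about $M_{7/2}(\Gamma_0(4))$ and has the virtue of explaining conceptually why the answer has precisely this multiplicative shape; both are legitimate.
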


\begin{theorem}\label{twomod11}
Let $p \ge 3$ be a prime  and  $\alpha \ge 0$ be an integer.  Suppose $1\le r\le 7$ and $pr\equiv 7$ \text{\rm{(mod $8$)}}, $0\le a<p$ and $p \nmid 8a+r$, where both $a$ and $r$ are integers. \\
(1)  If $p\equiv 1,3,4,5,9$ \text{\rm{(mod $11$)}},  then
\[\mathrm{pod}_{-3}\Big(11\cdot {{p}^{22\alpha +22}}n+\frac{(88a+11r){{p}^{22\alpha +21}}+3}{8}\Big)\equiv 0 \pmod{11}.\]
(2)  If $p\equiv 2,6,7,8,10$ \text{\rm{(mod $11$)}},   then
\[\mathrm{pod}_{-3}\Big(11\cdot {{p}^{4\alpha +4}}n+\frac{(88a+11r){{p}^{4\alpha +3}}+3}{8}\Big)\equiv 0 \pmod{11}.\]
\end{theorem}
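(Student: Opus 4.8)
The plan is to reduce both statements to Theorem~\ref{pod3mod11} combined with the multiplicative identity for $r_7$ in Lemma~\ref{r7re}. The first step is to rewrite the argument of $\mathrm{pod}_{-3}$ in the shape $11N+10$, so that Theorem~\ref{pod3mod11} applies, and to compute the associated quantity $8N+7$. Treating case (1), write
\[
M = 11\,p^{22\alpha+22}n + \frac{(88a+11r)p^{22\alpha+21}+3}{8}.
\]
The hypothesis $pr\equiv 7\pmod 8$, together with $p^2\equiv 1\pmod 8$ for odd $p$ (so that $p^{22\alpha+21}\equiv p$), guarantees that $M$ is an integer. Setting $M=11N+10$ gives $8M-3=11(8N+7)$, and a direct computation yields
\[
8M-3 = 88\,p^{22\alpha+22}n + (88a+11r)p^{22\alpha+21} = 11\,p^{22\alpha+21}\big(8pn+8a+r\big),
\]
whence $8N+7 = p^{22\alpha+21}\big(8pn+8a+r\big)$.

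The key structural observation is that the exponent $22\alpha+21 = 2(11\alpha+10)+1$ is odd, so I peel off one factor of $p$ and absorb it into the base: $8N+7 = p^{2(11\alpha+10)}\cdot m$ with $m = p\,(8pn+8a+r)$. Since $p\nmid 8a+r$ by hypothesis while $p\mid m$, we have $p\mid m$ but $p^2\nmid m$, so Lemma~\ref{r7re} applies with exponent parameter $\gamma=11\alpha+10$. Crucially, because $p\mid m$ the Legendre symbol $\big(\frac{-m}{p}\big)$ vanishes, and the formula collapses to
\[
r_7(8N+7) = \frac{p^{5\gamma+5}-1}{p^5-1}\,r_7(m) = \Big(\textstyle\sum_{k=0}^{\gamma}(p^5)^{k}\Big)\,r_7(m).
\]
Thus everything reduces to the vanishing modulo $11$ of the geometric sum, which has $\gamma+1 = 11(\alpha+1)$ terms.

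Now Euler's criterion supplies the dichotomy. The residues $\{1,3,4,5,9\}$ are exactly the quadratic residues modulo $11$, for which $p^5\equiv 1\pmod{11}$; then each term equals $1$ and the sum is $\equiv 11(\alpha+1)\equiv 0\pmod{11}$, proving (1) via Theorem~\ref{pod3mod11}. For case (2) the identical substitution, now with exponent $4\alpha+3 = 2(2\alpha+1)+1$, gives $8N+7 = p^{2(2\alpha+1)}\cdot m$ and $r_7(8N+7)\equiv \big(\sum_{k=0}^{2\alpha+1}(p^5)^{k}\big)\,r_7(m)\pmod{11}$, a sum of the even number $2(\alpha+1)$ of terms. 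The residues $\{2,6,7,8,10\}$ are the non-residues, for which $p^5\equiv -1\pmod{11}$, so the terms alternate $1,-1,1,-1,\dots$ and cancel in pairs, again yielding $r_7(8N+7)\equiv 0$. In both cases $\mathrm{pod}_{-3}(M)\equiv 4(-1)^{N}r_7(8N+7)\equiv 0\pmod{11}$.

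The individual computations are routine; the only genuine subtlety is keeping three moduli in play simultaneously — working modulo $8$ to force integrality of the arguments and to match the $8n+7$ shape demanded by Theorem~\ref{pod3mod11}, modulo $p$ to exhibit the divisibility $p\mid m$ that annihilates the Legendre-symbol term in Lemma~\ref{r7re}, and modulo $11$ to evaluate the resulting geometric sum. Once this bookkeeping is arranged, the conceptual crux is the clean split via $p^5\equiv\pm 1\pmod{11}$: the quadratic-residue case is settled by the term count $11(\alpha+1)$ being a multiple of $11$, and the non-residue case by pairwise cancellation of an even number of alternating terms.
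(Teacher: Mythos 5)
Your proposal is correct and follows essentially the same route as the paper: both reduce to Theorem~\ref{pod3mod11}, write $8N+7=p^{22\alpha+21}(8pn+8a+r)$ (resp.\ $p^{4\alpha+3}(\cdots)$), apply Lemma~\ref{r7re} with the odd power of $p$ split as $p^{2\gamma}\cdot p(\cdots)$ so the Legendre-symbol term vanishes, and kill the geometric factor $\frac{p^{5\gamma+5}-1}{p^5-1}$ modulo $11$ using $p^5\equiv\pm1$. Your explicit identification of the two residue classes as quadratic residues/non-residues via Euler's criterion, and your check of integrality via $pr\equiv7\pmod 8$, are just slightly more detailed renderings of steps the paper leaves implicit.
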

\begin{proof}
(1) Replacing $\alpha $ by $11\alpha +10$ and setting $n=pN$ in Lemma  \ref{r7re},  where $N$ is coprime to $p$. Note that $p\equiv 1,3,4,5,9$ (mod $11$) implies ${{p}^{5}}\equiv 1$ (mod $11$),  we have
\[\frac{{{p}^{55\alpha +55}}-1}{{{p}^{5}}-1}=1+{{p}^{5}}+{{p}^{10}}+\cdots +{{p}^{5(11\alpha +10)}}\equiv 0 \pmod{11}.\]
From which we know ${{r}_{7}}({{p}^{22\alpha +21}}N)\equiv 0$ (mod $11$).

Let $n={{p}^{22\alpha +22}}m+\frac{(8a+r){{p}^{22\alpha +21}}-7}{8}$.  We have
\[{{r}_{7}}(8n+7)={{r}_{7}}\big({{p}^{22\alpha +21}}(8pm+8a+r)\big)\equiv 0 \pmod{11}.\]
 By Theorem \ref{pod3mod11}, we obtain
 \[\mathrm{pod}_{-3}\Big(11\cdot {{p}^{22\alpha +22}}m+\frac{(88a+11r){{p}^{22\alpha +21}}+3}{8}\Big)=\mathrm{pod}_{-3}(11n+10)\equiv 0 \pmod{11}.\]

(2)  Let $n=pN$,  where $N$ is coprime to $p$, and we replace $\alpha $ by $2\alpha +1$ in Lemma \ref{r7re}.  Note that $p\equiv 2,6,7,8,10$ (mod  $11$) implies ${{p}^{5}}\equiv -1$ (mod $11$), we have ${{p}^{10\alpha +10}}\equiv 1$ (mod $11$).    We deduce that ${{r}_{7}}({{p}^{4\alpha +3}}N)\equiv 0$ (mod  $11$).

Let $n={{p}^{4\alpha +4}}m+\frac{(8a+r){{p}^{4\alpha +3}}-7}{8}$.  We have
\[{{r}_{7}}(8n+7)={{r}_{7}}\big({{p}^{4\alpha +3}}(8pm+8a+r)\big)\equiv 0 \pmod{11}.\]
By Theorem \ref{pod3mod11}, we obtain
 \[\mathrm{pod}_{-3}\Big(11\cdot {{p}^{4\alpha +4}}m+\frac{(88a+11r){{p}^{4\alpha +3}}+3}{8}\Big)=\mathrm{pod}_{-3}(11n+10)\equiv 0 \pmod{11}.\]
\quad
\end{proof}
For example,  let  $p=7$ in part (2) of Theorem \ref{twomod11}.  Accordingly we set $r=1$ and $a\in \{0,1,2,3,4,5\}$.  For any integer $n\ge 0$, we have
\[\mathrm{pod}_{-3}\Big(11\cdot {{7}^{4\alpha +4}}n+\frac{(88a+11)\cdot {{7}^{4\alpha +3}}+3}{8}\Big)\equiv 0 \pmod{11}.\]
Starting from the above congruence,  we can prove the following corollary by a  way similar to the proof of Corollary \ref{least},
\begin{corollary}
$\mathrm{pod}_{-3}(n)$ is divisible by $11$ for at least $1/4400$ of all nonnegative integers.
\end{corollary}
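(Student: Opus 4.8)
The plan is to imitate the density computation behind Corollary \ref{least}, taking as input the explicit family of congruences displayed just before the statement, namely
\[\mathrm{pod}_{-3}\Big(11\cdot 7^{4\alpha+4}n+\frac{(88a+11)\cdot 7^{4\alpha+3}+3}{8}\Big)\equiv 0 \pmod{11},\]
valid for all $n\ge 0$, all $\alpha\ge 0$, and all $a\in\{0,1,2,3,4,5\}$ (this being case (2) of Theorem \ref{twomod11} with $p=7$ and $r=1$). For each admissible pair $(\alpha,a)$ this is an arithmetic progression in $n$ with common difference $11\cdot 7^{4\alpha+4}$, so its natural density among the nonnegative integers is $1/(11\cdot 7^{4\alpha+4})$. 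If the progressions corresponding to distinct pairs $(\alpha,a)$ are pairwise disjoint, then the density of their union is the sum of the individual densities, and the result follows by adding up.

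The crux is the disjointness, which I would establish by passing to the variable $K=(8M-3)/11$, where $M$ denotes the argument of $\mathrm{pod}_{-3}$ above. A direct computation gives $8M-3=11\cdot 7^{4\alpha+3}(56n+8a+1)$, whence $K=7^{4\alpha+3}(56n+8a+1)$; since $M\mapsto K$ is affine with nonzero slope it is injective, so it suffices to show that the parametrization $(\alpha,a,n)\mapsto K$ is injective. The key point is that $7\nmid(56n+8a+1)$: indeed $56n+8a+1\equiv a+1\pmod 7$, which is nonzero precisely because $0\le a\le 5$. Therefore $v_7(K)=4\alpha+3$, so $\alpha$ is recovered from the $7$-adic valuation of $K$; dividing out $7^{4\alpha+3}$ leaves $56n+8a+1$, whose residue modulo $56$ equals $8a+1$ and hence determines $a$ (the six values $8a+1\in\{1,9,17,25,33,41\}$ being distinct modulo $56$), after which $n$ is determined as well. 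Thus the parametrization is injective and the progressions are pairwise disjoint.

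Finally I would sum the densities as a geometric series,
\[\sum_{\alpha=0}^{\infty}\sum_{a=0}^{5}\frac{1}{11\cdot 7^{4\alpha+4}}=\frac{6}{11\cdot 7^{4}}\sum_{\alpha=0}^{\infty}7^{-4\alpha}=\frac{6}{11\cdot 2401}\cdot\frac{2401}{2400}=\frac{1}{4400},\]
and conclude that every integer lying in one of these disjoint progressions satisfies $\mathrm{pod}_{-3}\equiv 0\pmod{11}$, so that $\mathrm{pod}_{-3}(n)$ is divisible by $11$ for at least $1/4400$ of all nonnegative integers. I expect the only genuine subtlety to be the disjointness step: the hypothesis $a\ne 6$ (equivalently $7\nmid 8a+r$ in Theorem \ref{twomod11}) is exactly what guarantees $v_7(K)=4\alpha+3$ and thereby pins down $\alpha$; without it the valuation argument collapses and progressions for different $\alpha$ could overlap, spoiling the additivity of densities.
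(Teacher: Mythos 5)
Your proposal is correct and follows the same route the paper sketches: take the displayed family of congruences from Theorem \ref{twomod11}(2) with $p=7$, $r=1$, $a\in\{0,\dots,5\}$, observe the resulting arithmetic progressions are pairwise disjoint, and sum the geometric series of densities to get $6/(11(7^4-1))=1/4400$. Your $7$-adic valuation argument for disjointness is a welcome elaboration of a step the paper merely asserts.
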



\begin{thebibliography}{0}

\bibitem{relation} P. Barrucand, S. Cooper, and M. D. Hirschhorn, Relations between squares and triangles, Discrete Math. \textbf{248} (2002), 245--247.

\bibitem{Bruce}B. C. Berndt, \emph{Number Theory in the Spirit of Ramanujan}, Am. Math. Soc. (Providence, 2006).

\bibitem{wyc}W. Y. C. Chen and B. L. S. Lin, Congruences for bipartitions with odd parts distinct, Ramanujan J. \textbf{25} (2) (2011), 277--293.

\bibitem{Cooper} S. Cooper, Sums of five, seven and nine squares, Ramanujan J. \textbf{6} (2002),  469--490.

\bibitem{hisc}M. D. Hirschhorn and J. A. Sellers, Arithmetic properties of partitions with odd parts distinct, Ramanujan J. \textbf{22} (2010), 273--284.

\bibitem{3square} M. D. Hirschhorn and J. A. Sellers, On representations of a number as a sum of three squares, Discrete Math. \textbf{199} (1999), 85--101.

\bibitem{lovejoy} J. Lovejoy and R. Osburn, Quadratic forms and four partition functions modulo 3, Integers  \textbf{11} (2011),  A4.

\bibitem{radu}S. Radu and J. A. Sellers, Congruence properties modulo 5 and 7 for the pod function, Int. J. Number Theory \textbf{07} (2011),  2249--2259.

\bibitem{Wang2}L. Wang, Another proof of a conjecture of Hirschhorn and Sellers on overpartitions, J. Integer Seq.  \textbf{17} (2014), Article 14.9.8, 8pp.

\bibitem{Wang3}L. Wang, New congruances of partitions where the old parts are distinct, J. Integer Seq. 18 (2015) Article 15.4.2, 8 pp.
\end{thebibliography}
\end{document}